\documentclass[12pt]{amsart}

\usepackage{amssymb}
\usepackage{stmaryrd}
\usepackage{fullpage}
\usepackage{amscd}
\usepackage[all]{xy}
\usepackage{comment}
\usepackage{mathrsfs}
\usepackage{longtable}

\usepackage{tikz}
\usetikzlibrary{positioning, calc}

\usepackage{color}
\usepackage{hyperref}
\usepackage{colonequals}

\numberwithin{equation}{section}
\newtheorem{theorem}[equation]{Theorem}
\newtheorem{lemma}[equation]{Lemma}
\newtheorem{corollary}[equation]{Corollary}
\newtheorem{proposition}[equation]{Proposition}

\theoremstyle{definition}
\newtheorem{definition}[equation]{Definition}

\theoremstyle{remark}
\newtheorem{remark}[equation]{Remark}

%%%%%%%%%%%%%%%%%%%%%%%%%%%%%%%%%%%%%%%%%%%%%%%%%%%%%%%%%%%%%%%
%Commands
%%%%%%%%%%%%%%%%%%%%%%%%%%%%%%%%%%%%%%%%%%%%%%%%%%%%%%%%%%%%%%%%

\newcommand{\CC}{\mathbb{C}}
\newcommand{\FF}{\mathbb{F}}
\newcommand{\F}{\mathbb{F}}

\newcommand{\QQ}{\mathbb{Q}}
\newcommand{\QQbar}{\overline{\mathbb{Q}}}
\newcommand{\RR}{\mathbb{R}}
\newcommand{\ZZ}{\mathbb{Z}}

\newcommand{\Arg}{\operatorname{Arg}}

\newcommand{\End}{\operatorname{End}}

\newcommand{\Gbar}{\overline{G}}
\newcommand{\supp}{\operatorname{supp}}

\newcommand{\Fr}{\operatorname{Fr}}

\newcommand{\Hom}{\operatorname{Hom}}

\newcommand{\Gal}{\operatorname{Gal}}
\newcommand{\GL}{\operatorname{GL}}

\newcommand{\rank}{\operatorname{rank}}

\begin{document}

%%%%%%%%%%%%%%%%%%%%%%%%%%%%%%%%%%%%%%%%%%%%%%%%%%%%
%%%%%%%%%%%%%% TITLE %%%%%%%%%%%%%%%%%%%%%%%%%%%%%%%%
%%%%%%%%%%%%%%%%%%%%%%%%%%%%%%%%%%%%%%%%%%%%%%%%%%%%

 \title{Angle Ranks of Abelian Varieties}

\author{Taylor Dupuy}
\author{Kiran S. Kedlaya}
\author{David Zureick-Brown}

\date{}
\thanks{Kedlaya was supported by NSF (DMS-1802161, DMS-2053473) and UC San Diego (Warschawski Professorship). Zureick-Brown was supported by NSF (DMS-1555048).}

\begin{abstract}
Using the formalism of Newton hyperplane arrangements, we resolve the open questions regarding angle rank left over from work of the first
two authors with Roe and Vincent.
As a consequence we end up generalizing theorems of Lenstra--Zarhin and Tankeev proving several new cases of the Tate conjecture for abelian varieties over finite fields. 
We also obtain an effective version of a recent theorem of Zarhin bounding the heights of coefficients in multiplicative relations among Frobenius eigenvalues.
\end{abstract}

\maketitle
\tableofcontents

\section{Introduction}

Let $A$ be an absolutely simple $g$-dimensional abelian variety over a finite field $\FF_q$ where $q$ is a power of the prime $p$.
Let $P(T)=\det(T-\Fr_q\vert H^1(A))$ be the characteristic polynomial of the Frobenius acting on the first $\ell$-adic \'etale cohomology for $\ell$ coprime to $q$. 
For future reference we will write 
 $$P(T)=(T-\alpha_1)(T-\alpha_2)\cdots (T-\alpha_{2g})$$ 
and index the roots $\alpha_i$ so that $\overline{\alpha_i} = \alpha_{i+g}=q/\alpha_i$ where $1\leq i \leq g$. 
Also for future reference, we will also refer to the Galois group $G$ of $P(T)$ as the Galois group of $A$. 
Since $P(T)$ has rational coefficients, the Galois group $G$ is over $\QQ$.
We remark that $G \subset \FF_2^g\rtimes S_g$ (i.e., the Weyl group of $\mathrm{Sp}_{2g}$)
and we call the kernel of the projection to $S_g$ the \emph{code} of the abelian variety and view it as a subgroup of $\FF_2^g$. 
This will appear again in one of our results.

We are interested in the multiplicative relations among the eigenvalues of Frobenius. More precisely, we are interested in finding all of the tuples $(e_1,e_2,\ldots,e_{2g})\in \ZZ^{2g}$ such that 
\begin{equation}\label{eqn:basic-relation}
  \alpha_1^{e_1}\alpha_2^{e_2} \cdots \alpha_{2g}^{e_{2g}} = \zeta,
\end{equation}
where $\zeta$ is some root of unity. 
Roughly speaking, one might already see that this has something to do with the Tate conjecture, as relations among eigenvalues have to do with relations among eigenvectors, which seem to be related to Tate classes --- one can also see that Galois acts on the right hand side via a cyclotomic character. 
These relations are indeed related to Tate classes.
We will return to this after defining the angle rank.

Fix an algebraic closure $\QQ \subset \overline{\QQ}$. Consider the group $\Gamma \subset \QQbar^{\times}$ generated by the roots of $P(T)$ under multiplication. 
\begin{definition}
	We define the \emph{angle rank} $\delta$ of $A$ by $\delta=\rank(\Gamma)-1$.  
\end{definition}
The angle rank is a number between $0$ and $g$ which reflects how many multiplicative relations there are among eigenvalues of the Frobenius; a small angle rank corresponds to a large number of relations among the eigenvalues. The angle rank also appears in the study of $p$-adic absolute values of $\#A(\FF_{q^r})$ (see for example Mueller \cite{Mueller2013}).
The name \emph{angle rank} comes from the fact that one could also define $\delta$ as the dimension of the $\QQ$-vector subspace of $\RR/\QQ$ spanned by the images of $\Arg(\alpha)/2\pi \in \RR$ where $\alpha$ runs over the Frobenius eigenvalues. 

\begin{remark}[Angle rank and exotic Tate classes]
If the angle rank is maximal, i.e.~$\delta(A)=g$, then  there are no nontrivial multiplicative relations of the type described in equation \eqref{eqn:basic-relation}. 
The condition $\delta(A)=g$ also implies that $A$ is \emph{neat}, meaning that all of the Tate classes are generated in codimension 1. 
This condition also implies the Tate conjecture holds for $A$: since Tate proved the Tate conjecture in codimension 1, any result that implies $\delta(A)=g$ also implies that the Tate conjecture holds for all powers of $A$. 
There is also a converse to this: when the angle rank is non-maximal, i.e.~$\delta(A)<g$,  there exists an exotic Tate class on some power of $A$; see \cite[Elementary Lemma 6.4]{Zarhin1993}, \cite{Zarhin2015}. 
\end{remark}

The first two authors, together with Roe and Vincent, undertook a systematic study of how angle rank is related to the Newton polygon and Galois group of $P(T)$ after developing a database of isogeny classes of abelian varieties for the LMFDB \cite{Dupuy2021}. 
Although much data was produced, and we found counterexamples to a conjecture about angle rank \cite{Dupuy2021b} (explicitly exhibiting ordinary, geometrically simple Jacobians with non-maximal angle rank), at the time of finishing that manuscript it was unclear as to what constraints the Newton polygon, Galois group, and angle rank impose on each other. 
We invite the reader to consult Tables 16--21 in the appendix of \cite{Dupuy2021} to see what sort of combinations can occur.
The reader may find it useful to compare our results to the cases found there. 

We report in this manuscript that the Galois group, Newton polygon, and angle rank are not independent, in that the \emph{permutation representation} of the Newton slopes actually determines the angle rank.
This subtlety was overlooked in \cite{Dupuy2021}.
Furthermore, certain combinations of Newton polygon and group are provably impossible (this was open at the time of writing \cite{Dupuy2021}). 
The subtlety in the data is that there are in fact many ways in which a group and a Newton polygon can be represented by a collection of roots; the fact that the Newton polygon and Galois group (as a permutation representation) determine the angle rank dates back at least to a letter from Serre to Ribet from 1981 \cite[pp.~6--11]{Serre2013} introducing the \emph{Frobenius torus} of an abelian variety. (This point of view was subsequently fleshed out in \cite{Chi1992}.) In this context the angle rank is one less than the dimension of the Frobenius torus.

In the present manuscript, the main object of study is the vector subspace $V$ of $\QQ^g$ spanned by $(v(\beta_1),\ldots,v(\beta_g))$ as $v$ varies over extensions of the $p$-adic valuation to $L$, the splitting field of $P(T)$. 
Here $\beta_i = \alpha_i/\alpha_{i+g}$ where $1\leq i \leq g$.
We refer to the system of hyperplanes defined by these vectors in the dual space as the \emph{Newton hyperplane arrangement}. 
Since each valuation can be obtained from the others by composing with a Galois element,
$V$ carries the structure of a $\Gal(L/\QQ)$-module; we call this the \emph{Newton hyperplane representation}.
\begin{remark}
	The authors arrived at this choice of subspace after playing with many other variants of this construction which are not included in this manuscript, so these choices may not appear obvious to the reader.
\end{remark}

In this manuscript, we report that the Newton hyperplane representation is a powerful hands-on tool to unify and generalize many known cases of the Tate conjecture for abelian varieties over finite fields with short proofs. In particular we prove the following:

\begin{theorem}[Generalized Lenstra--Zarhin]
Suppose that the Newton slopes of $A$ consist of $1/2$ with multiplicity $2$ and all other slopes in $\ZZ_{(2)}$. 
\begin{enumerate}
	\item[(a)]
	If $g$ is even, then $A$ has maximal angle rank.
	\item[(b)]
	If $g$ is odd, then $A$ has angle rank at least $g-1$.
\end{enumerate}
\end{theorem}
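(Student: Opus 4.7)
The plan is to invoke the identification $\delta = \dim_\QQ V$ between the angle rank and the dimension of the Newton hyperplane representation, and then establish the needed lower bounds on $\dim V$ via reduction modulo the prime $2$. The identification follows from the relations $\alpha_i^2 = q\beta_i$ together with Kronecker's theorem: since $|\beta_i| = 1$ at every archimedean place and $\beta_i$ is a unit at every non-archimedean place away from $p$, a relation $\prod \beta_i^{e_i} = \zeta$ with $\zeta$ a root of unity is detected precisely by the vanishing of $\sum e_i v(\beta_i)$ on every extension $v$ of the $p$-adic valuation to $L$. Thus the annihilator of $V$ in $\QQ^g$ has dimension $g - \dim V$, and $\delta = \rank \Gamma - 1 = \dim V$.

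Normalize the chosen $p$-adic valuation so $v(q) = 1$; then the $i$-th entry of the Newton vector $\gamma_v = (v(\beta_1),\ldots,v(\beta_g))$ is $2 s_i - 1$, where $s_i$ is the slope of $\alpha_i$ under $v$. By hypothesis, exactly one entry vanishes (at the unique pair of slope-$\tfrac{1}{2}$ eigenvalues), while each remaining entry $2 s_i - 1$ is a $2$-adic unit: writing $s_i = a/b$ with $b$ odd forces $2a - b$ odd. Hence $\gamma_v \in \ZZ_{(2)}^g$, and its mod-$2$ reduction is $\mathbf{1} - e_k \in \FF_2^g$, where $k \in \{1,\ldots,g\}$ is the position of the $0$ entry.

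Since $A$ is absolutely simple, the Galois group $G = \Gal(L/\QQ)$ acts transitively on $\{\alpha_1,\ldots,\alpha_{2g}\}$ preserving the pairing $\alpha_i \leftrightarrow \alpha_{i+g}$, so its projection to $S_g$ is transitive on $\{1,\ldots,g\}$. The extensions of $v_p$ to $L$ form a single $G$-orbit, so as $v$ varies the position $k$ takes every value in $\{1,\ldots,g\}$, and every vector $v_k := \mathbf{1} - e_k \in \FF_2^g$ arises as a reduction of some $\gamma_v$. Since $V$ is the $\QQ$-span of vectors in the free $\ZZ_{(2)}$-module $\ZZ_{(2)}^g$, we have $\dim_\QQ V \geq \dim_{\FF_2} \Span\{v_1,\ldots,v_g\}$.

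A parity calculation concludes the argument. Because $v_i + v_j = e_i + e_j$, the span $\Span\{v_k\}$ always contains the $(g-1)$-dimensional even-weight subspace of $\FF_2^g$; each $v_k$ itself has weight $g-1$. If $g$ is even this weight is odd, so $\Span\{v_k\} = \FF_2^g$, whence $\delta = g$; if $g$ is odd each $v_k$ has even weight, $\Span\{v_k\}$ equals the even-weight subspace, and $\delta \geq g - 1$. The main subtlety is the preliminary identification $\delta = \dim V$ (and the correct normalization of the valuation, especially when $p = 2$); once this is in place the mod-$2$ reduction makes transparent both the role of the hypothesis "other slopes in $\ZZ_{(2)}$" --- it forces the nonzero entries to be $2$-adic units --- and the unavoidable loss of one dimension when $g$ is odd.
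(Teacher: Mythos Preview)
Your proof is correct and follows essentially the same approach as the paper's: both invoke the identification $\delta = \dim_\QQ V$ (the paper's Proposition~3.1), observe that under the hypothesis each Newton vector lies in $\ZZ_{(2)}^g$ and reduces modulo~$2$ to a row of the matrix $J-I$ (all ones off the diagonal, zero on the diagonal), use transitivity of $\overline{G}$ to obtain every such row, and bound $\dim_\QQ V$ below by the $\FF_2$-rank of $J-I$. Your write-up is somewhat more explicit than the paper's---you spell out why slopes in $\ZZ_{(2)}\setminus\{1/2\}$ yield $2$-adic units in the entries $2s_i-1$, and you compute the $\FF_2$-rank directly via the parity of the weight $g-1$ rather than leaving it implicit---but the underlying argument is the same.
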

In case (a), the Tate conjecture is true for all powers of $A$.
This generalizes the previously known cases of the Tate conjecture where $g$ is even and $A$ is almost ordinary proved in \cite{Lenstra1993} (in a manner suggested in \cite[Remark~6.7]{Lenstra1993}).

Assume in what follows that $A$ is not supersingular.
The Galois group $G$ of $P(T)$ is an extension of $\Gbar\subset S_g$ by $C \subset \FF_2^g$.
We call $C$ the \emph{code} of $A$ and it is convenient to view $C$ as a collection of functions $\lbrace 1,2,\ldots,g\rbrace \to \FF_2$.  
In turn, we will call an element $c\in C$ a \emph{codeword}.
It turns out that there exists a unique partition of $\{1,\dots,g\}$ into subsets such that each codeword is constant on these subsets and the sets are maximal with this property.
In the theorem below we call this the \emph{level set partition}.
\begin{theorem}[Generalized Tankeev]
Let $m$ be the number of subsets in the level set partition.
If $g/m$ is prime, then $A$ has angle rank in $\{m, g-m, g\}$.
\end{theorem}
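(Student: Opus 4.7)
The plan is to combine the identification $\delta = \dim V$ (the central theme developed earlier in this paper) with the representation theory of transitive subgroups of $S_p$ for $p$ prime. I begin by analyzing $V \subset \QQ^g$ through the action of the code $C$: each $c \in C$ acts diagonally on $\QQ^g$ by $x_i \mapsto (-1)^{c_i} x_i$, so the characters of $C$ appearing in $\QQ^g$ are precisely the $m$ characters $\chi_j \colon c \mapsto (-1)^{c_{P_j}}$ indexed by the level set partition $P_1, \ldots, P_m$ (two coordinates $i, i'$ give the same character of $C$ exactly when they lie in the same level set). The isotypic component for $\chi_j$ is $\QQ^{P_j}$, and because $V$ is $C$-stable this yields a decomposition $V = \bigoplus_{j=1}^m V_j$ with $V_j = V \cap \QQ^{P_j}$.

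Next I use the hypothesis on $g/m$. Because $A$ is absolutely simple, $\Gbar$ acts transitively on $\{1,\ldots,g\}$ and hence transitively permutes the $m$ level sets. This forces $|P_j| = g/m = p$ for all $j$ and makes $\dim V_j$ independent of $j$; call the common value $d$, so $\dim V = md$. Fixing $j$, let $H_j \le \Gbar$ be the stabilizer of $P_j$; then $H_j$ acts transitively on the $p$-element set $P_j$. Since every $c \in C$ is constant on $P_j$, any lift of $H_j$ inside $G$ acts on $\QQ^{P_j}$ as a global sign times the permutation coming from $H_j$, so the $G$-stable subspace $V_j \subseteq \QQ^{P_j}$ is exactly an $H_j$-submodule under the ordinary permutation action.

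The key representation-theoretic input is that for any transitive subgroup $H \le S_p$ with $p$ prime, the permutation module $\QQ^p$ decomposes as $\QQ\cdot\mathbf{1} \oplus W$ with the $(p-1)$-dimensional augmentation submodule $W$ \emph{irreducible} as a $\QQ H$-module. I would establish this in one stroke: by Cauchy's theorem $H$ contains an element of order $p$, necessarily a $p$-cycle, and the regular representation of the cyclic subgroup it generates decomposes over $\QQ$ as $\QQ \oplus \QQ(\zeta_p)$ (since $\Phi_p$ is $\QQ$-irreducible), so $W$ restricts to the irreducible $\QQ(\zeta_p)$ and is already irreducible under the larger group $H$. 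The only $H_j$-stable subspaces of $\QQ^{P_j}$ are therefore $0,\ \QQ\cdot\mathbf{1},\ W,\ \QQ^{P_j}$, so $d \in \{0, 1, p-1, p\}$ and $\dim V \in \{0, m, g-m, g\}$.

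It remains to exclude $\dim V = 0$. That case would force $v(\beta_i) = 0$ for every extension $v$ of the $p$-adic valuation and every $i$, i.e.\ $v(\alpha_i) = v(\alpha_{i+g}) = v(q)/2$ for all $i$, making $A$ supersingular contrary to the standing hypothesis. I expect the main obstacle to be the $\QQ$-irreducibility of $W$ for arbitrary transitive $H \le S_p$; this is the step where primality of $g/m$ is essential, and the Cauchy/cyclotomic shortcut sketched above is what keeps the argument clean.
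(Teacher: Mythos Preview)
Your isotypic decomposition $V = \bigoplus_j V_j$ under the $C$-action is correct and gives a clean alternative to the paper's support argument; the transitivity step and the exclusion of $\dim V = 0$ are also fine.

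The gap is the assertion that ``since every $c \in C$ is constant on $P_j$, any lift of $H_j$ inside $G$ acts on $\QQ^{P_j}$ as a global sign times the permutation coming from $H_j$.'' Constancy of codewords on $P_j$ only forces elements of $C$ to act on $\QQ^{P_j}$ as $\pm 1$; it says nothing about the sign pattern of a general element of the stabilizer of $P_j$ in $G$. Concretely, if $\bar h \in H_j$ is a $p$-cycle, a lift $h$ is a \emph{signed} $p$-cycle on $P_j$, and when the product of its signs is $-1$ it has order $2p$ and is not a scalar times a permutation matrix. You are in effect asserting that the image $H$ of the stabilizer in $\GL(\QQ^{P_j})$ is $\ZZ/2\ZZ \times \overline{H}$, which the paper only obtains under the extra hypothesis that a one-dimensional $H$-stable line exists. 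So $V_j$ need not be stable under the unsigned permutation action of $H_j$, and your appeal to the $\QQ H_j$-module structure of $\QQ^{P_j}$ is not justified.

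The repair is exactly what the paper does: work with the signed group directly rather than descending to $\overline{H}$. A lift $h$ of a $p$-cycle has characteristic polynomial $t^p \mp 1$ on $\QQ^{P_j}$, which over $\QQ$ has irreducible factors of degrees $1$ and $p-1$ (or is itself irreducible of degree $2$ when $p=2$ and the sign product is $-1$). Either way the $\langle h\rangle$-invariant subspaces of $\QQ^{P_j}$ have dimension in $\{0,1,p-1,p\}$, and hence so does the $H$-stable subspace $V_j$, without ever passing to the unsigned group.
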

\noindent This generalizes Tankeev's theorem \cite{Tankeev1984} that if $g$ is prime then $\delta \in \lbrace 1, g-1,g\rbrace$.
Along the way we prove things like the fact that $C=\FF_2^g$ implies $\delta=g$
(see Corollary~\ref{remark: vector space dimension})
 and give a number of criteria for determining when $C=\FF_2^g$.

The code always contains complex conjugation,
which corresponds to the all-1s codeword.
We say that the code is \emph{nontrivial} if $\dim_{\F_2} C \geq 2$.

\begin{theorem}
Suppose that $\overline{G}$ acts primitively on $\{1,\dots,g\}$ and $C$ is nontrivial. Then $A$ has maximal angle rank. In particular, the Tate conjecture is true for all powers of $A$.
\end{theorem}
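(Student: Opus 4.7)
My plan is to analyze the Newton hyperplane representation $V \subseteq \mathbb{Q}^g$ through its natural action of $G \subseteq \mathbb{F}_2^g \rtimes S_g$ as signed permutations: the quotient $\overline{G}$ permutes coordinates and a codeword $c \in C$ acts diagonally by $(-1)^{c_i}$ on the $i$-th coordinate. (The sign flip comes from the fact that an element of $C$ swaps $\alpha_i$ with $\alpha_{i+g}$, hence sends $\beta_i$ to $\beta_i^{-1}$ and negates $v(\beta_i)$.) The strategy is to combine primitivity of $\overline{G}$ with the nontriviality of $C$ to force $V$ to be either $0$ or $\mathbb{Q}^g$, and then to exclude $V=0$ by non-supersingularity.

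First I would restrict the $G$-action to $C$ and decompose $\mathbb{Q}^g = \bigoplus_{i=1}^g \mathbb{Q} e_i$ into the one-dimensional $C$-isotypic components with characters $\chi_i(c) = (-1)^{c_i}$. The equality $\chi_i = \chi_j$ holds exactly when $c_i = c_j$ for every $c \in C$, i.e.~when $i$ and $j$ belong to the same block of the level set partition of $C$. Since $C$ is normal in $G$, the induced $\overline{G}$-action on $\mathbb{F}_2^g$ preserves $C$, so the level set partition is $\overline{G}$-invariant.

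Next I would invoke the hypotheses. Primitivity of the $\overline{G}$-action on $\{1,\dots,g\}$ forces the level set partition to be either all singletons or the single-block partition. The one-block case would mean every codeword is constant, so $C \subseteq \{\mathbf{0},\mathbf{1}\}$ and $\dim_{\F_2} C \leq 1$, contradicting the nontriviality hypothesis. Thus the $\chi_i$ are pairwise distinct, and every $C$-stable subspace of $\mathbb{Q}^g$ has the form $\bigoplus_{i \in S} \mathbb{Q} e_i$ for some $S \subseteq \{1,\dots,g\}$. Applying this to $V$, which is $C$-stable, and noting that any $\sigma \in G$ projecting to $\pi \in \overline{G}$ sends $\mathbb{Q} e_i$ to $\mathbb{Q} e_{\pi(i)}$ up to sign, the set $S$ must be $\overline{G}$-stable. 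Primitivity (in particular, transitivity) then forces $S = \emptyset$ or $S = \{1,\dots,g\}$.

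Finally I would rule out $V = 0$: this would force $v(\beta_i) = 0$ for every $i$ and every extension $v$, hence every $\alpha_i$ would have $p$-adic slope $\tfrac{1}{2}$, making $A$ supersingular, contrary to the running assumption. Therefore $V = \mathbb{Q}^g$, and by the paper's earlier identification of the angle rank with $\dim V$ we conclude $\delta = g$, which by the remark in the introduction implies the Tate conjecture for all powers of $A$. The main substantive step is recognizing that the level set partition is genuinely $\overline{G}$-invariant and that $V$ decomposes as a coordinate subspace; once these structural points are in place, primitivity and the signed-permutation decomposition dispatch the rest automatically.
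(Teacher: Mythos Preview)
Your proof is correct and follows essentially the same route as the paper: you reproduce inline the content of the paper's Lemma on the level set partition and its Corollary that the angle rank is a multiple of $m$, then combine primitivity with nontriviality of $C$ exactly as the paper does to force $m=g$ and hence $V=\QQ^g$. The only cosmetic difference is that you phrase the decomposition $V=\bigoplus_i V_i$ via the $C$-isotypic decomposition (distinct characters $\chi_i$) rather than the paper's support-minimality argument, but these are the same statement.
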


\smallskip

Finally, using the formalism of the Newton hyperplane vector space together with a variant of Siegel's lemma 
on integer solutions of linear systems of equations,
we give an effective version of a recent theorem of Zarhin.
Before stating the theorem, we fix some notation. 
Consider a relation among Frobenius eigenvalues
$$\alpha_1^{e_1}\alpha_2^{e_2} \cdots \alpha_{2g}^{e_{2g}} = \zeta q^d$$
where $e_i\geq 0$, $d = (e_1 + \cdots + e_{2g})/2$ is an integer, and $\zeta$ is a root of unity.
In what follows the \emph{weight} of the relation $(e_1,\ldots,e_{2g})$ is defined to be the $l^1$-norm $\vert e_1\vert + \cdots + \vert e_{2g} \vert$.

\begin{theorem}[Effective Zarhin]
		Let $A$ be an absolutely simple abelian variety of dimension $g$ over $\FF_q$ with Frobenius eigenvalues $\alpha_1,\ldots,\alpha_{2g}$, Galois group $G$, and angle rank $\delta$. 
	The integer vectors $(e_1,\ldots,e_{2g})\in \ZZ^{2g}$ such that $\alpha_1^{e_1}\cdots\alpha_{2g}^{e_{2g}} = \zeta q^d$ for some root of unity $\zeta$ and some integer $d$ form a group generated by vectors of weight less than or equal to 
	$$g m^{2\delta} \delta^{\delta+1},$$
	where $m$ denotes the least common denominator of the Newton slopes of $A$. 
\end{theorem}
In \cite{Zarhin2021}, Zarhin proves that there exists a constant as above but does not exhibit an explicit value.
To get a bound depending only on $g$, we may replace $\delta$ by $g$ and $m$ by $g!$; however, in practice this is a gross overestimate of $m$ (e.g., when $A$ is ordinary we have $m=1$). 

\subsection*{Acknowledgements} We thank Alexander Mueller, David Roe, Jean-Pierre Serre, Caleb Springer, Christelle Vincent, and Yuri Zarhin for helpful conversations. We thank James Milne for providing some references for Subsection \ref{ss:hyperpl-arrang}. 

We are grateful to the organizers of ``Emory Conference on Higher Obstructions to Rational Points'' (Atlanta, 2017) and ``Kummer Classes and Anabelian Geometry'' (Burlington, 2016) for providing stimulating environments where progress was made.

\section{Background and notation}

Throughout this paper, let $A$ be an absolutely simple abelian variety of dimension $g$ over a finite field $\FF_q$ of characteristic $p$. 
Denote the eigenvalues of Frobenius on $A$ by $\alpha_1,\dots,\alpha_{2g}$ with the convention that $\alpha_{i+g} = \overline{\alpha}_i$ for $i=1,\dots,g$;
we view these quantities as elements of the number field $L = \QQ(\alpha_1,\dots,\alpha_{2g})$ rather than as complex numbers (so that we may consider also $p$-adic embeddings).
The field $L$ is the Galois closure of the CM field $\QQ(\alpha_1)$;
let $L^+ = \QQ(\alpha_1 + \overline{\alpha}_1, \dots, \alpha_g + \overline{\alpha}_g) \subset L$ be the Galois closure of the totally real field $\QQ(\alpha_1 + \overline{\alpha}_1)$.

We also work with the quantities $\beta_i = \alpha_i^2/q = \alpha_i/\overline{\alpha}_i$.
Note that by swapping $\alpha_i$ with $\alpha_{i+g}$ in the original ordering of the eigenvalues, we can replace $\beta_i$ with $\overline{\beta}_i$ without changing $\beta_j$ for $j \neq i$.

\subsection{Angle rank}

Let $\Gamma$ be the multiplicative subgroup of $L^\times$ generated by $\alpha_1,\dots,\alpha_{2g}$ (which automatically contains $q$).
Let $\Gamma'$ be the subgroup of $\Gamma$ generated by $\beta_1, \dots, \beta_g$. Each of the complex absolute values of $L$ induces the same surjective homomorphism
$\Gamma \to (\sqrt{q})^\ZZ$ whose kernel contains $\Gamma'$ with finite index; we deduce that $\rank \Gamma' = \rank \Gamma - 1$. We refer to this quantity as the \emph{Frobenius angle rank} of $A$, or for short the \emph{angle rank};
it belongs to $\{0,\dots,g\}$. If the angle rank equals $g$, we also say that $A$ has \emph{maximal angle rank}.

The angle rank of $A$ is invariant under base change from $\FF_q$ to a finite extension (see \cite[\S3.4]{Dupuy2021}). 
It is easy to check (see \cite[Lemma~2.1]{Dupuy2021}) that $A$ has angle rank $g$ if and only if some base change of $A$ is \emph{neat} in the sense of Zarhin
\cite[\S 2]{Zarhin2015}. Another equivalent condition is that for every positive integer $m$, all of the Tate classes of $A^m$ (in all codimensions) are Lefschetz classes
\cite{Hazama1985}; this implies that the Tate conjecture holds for every power of $A$ \cite{Lenstra1993}.

Another interpretation of the angle rank is that it is one less than the rank of the \emph{Frobenius torus} associated to $A$ in the sense of Serre \cite[pp. 6--11]{Serre2013}.

\subsection{The Galois action on Frobenius eigenvalues}\label{sec:galois}

We will observe later (Remark~\ref{remark: low angle rank}) that $A$ has angle rank 0 if and only if it is supersingular, which can only occur for $g=1$. We omit this case in the following discussion; we may thus assume that $\alpha_1,\dots,\alpha_{2g}$ are distinct.

We identify $G \colonequals \Gal(L/\QQ)$ with a subgroup of the group $S_g^{\pm} \cong\FF_2^g \rtimes S_g$ of $g \times g$ signed permutation matrices
by mapping the element $h \in G$ to the matrix $M(h)$ with
\begin{equation}\label{eqn:m-matrix}
M(h)_{ij} = \begin{cases} 1, & h(\alpha_j) = \alpha_i \\
-1, & h(\alpha_j) = \overline{\alpha}_i \\
0, & \mbox{otherwise}.
\end{cases}
\end{equation}
This gives us the following commutative diagram with exact rows and injective vertical arrows, in which $\overline{G} \colonequals \Gal(L^+/\QQ)$
identified with a subgroup of $S_g$:
\begin{equation} \label{eq:imprimitivity sequence}
\xymatrix{
1 \ar[r] & C \ar[r] \ar[d] & G \ar[r] \ar[d] & \overline{G} \ar[r] \ar[d] & 1 \\
1 \ar[r] & \FF_2^g \ar[r] & S_g^{\pm} \ar[r] & S_g \ar[r] & 1
}
\end{equation}

\begin{remark}
The problem of classifying CM types was first studied by Dodson \cite{Dodson1984}.
The upper row of \eqref{eq:imprimitivity sequence} is what Dodson calls an \emph{imprimitivity sequence} for the group $G$;
Dodson further notes that from \eqref{eq:imprimitivity sequence} we obtain a 1-cocycle (crossed homomorphism) $\overline{G} \to \FF_2^g/C$.
\end{remark}

Because $A$ is absolutely simple, the Honda--Tate theorem \cite[Theorem~8]{Waterhouse1971} implies that the action of $\overline{G} \subseteq S_g$ on $\{1,\dots,g\}$ is \emph{transitive}, that is, the set $\{1,\dots,g\}$ forms a single $\overline{G}$-orbit.
Some possible stronger restrictions that can be imposed on this action include the following.
\begin{itemize}
\item
The action of $\overline{G}$ is \emph{primitive} if there is no subset $S$ of $\{1,\dots,g\}$ with $2 \leq \#S < g$ such that for any two $g,h \in \overline{G}$, $g(S)$ and $h(S)$ are either equal or disjoint.
If such an $S$ exists, it is called a \emph{block} or a \emph{domain of imprimitivity} for the action; the set of translates of $S$ is then itself a set with a $\overline{G}$-action.
\item
The action of $\overline{G}$ is \emph{$2$-transitive} if the induced action on ordered pairs of distinct elements of $\{1,\dots,g\}$ is transitive. Any 2-transitive action is also primitive.
\end{itemize}

We call the subgroup $C$ of $G$ the \emph{code} of $A$, and refer to elements of $C$ as \emph{codewords}; this is meant to suggest an interpretation of $C$ as a binary linear code of length $g$,
although we will not make use of any deep concepts from coding theory.
Note that the unique complex conjugation on $L$ corresponds to the all-ones vector in $C$; if this is the only nonzero codeword, we say that $C$ is \emph{trivial} (or that $A$ has \emph{trivial code}).

\section{The method of slope vectors}

\subsection{The slope vector subspace}

Let $V$ be the subspace of $\QQ^g$ spanned by the vectors $(v(\beta_1),\dots,v(\beta_g))$ as $v$ varies over all extensions of the $p$-adic valuation to $L$; as is customary with Newton polygons, valuations are normalized so that $v(q)=1$.
The reader should compare the next proposition to \cite[Lemma 3.6, (b) implies (c)]{Zywina2020}  and  \cite[Theorem 3.4]{Chi1992};
see also Remark~\ref{rmk:CM types}.
\begin{proposition} \label{lemma: angle rank from slopes}
The angle rank of $A$ is equal to $\dim_{\QQ} V$.
\end{proposition}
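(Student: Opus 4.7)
The plan is to compute $\rank \Gamma'$ directly via the surjection $\phi \colon \ZZ^g \twoheadrightarrow \Gamma'$ sending $(e_1,\dots,e_g)$ to $\prod \beta_i^{e_i}$, and to identify $\ker(\phi)\otimes\QQ$ with the orthogonal complement $V^\perp \subset \QQ^g$. Since $\rank \Gamma' = g - \dim_\QQ V^\perp = \dim_\QQ V$, this will yield the proposition.

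The easy inclusion $\ker(\phi)\otimes \QQ \subseteq V^\perp$ follows because any relation $\prod \beta_i^{e_i} = \zeta$ for a root of unity $\zeta$ forces $\sum e_i v(\beta_i) = v(\zeta) = 0$ for every valuation $v$ of $L$. Taking $v$ to run through the extensions of the $p$-adic valuation gives that $(e_1,\dots,e_g)$ annihilates each of the spanning vectors of $V$.

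For the reverse inclusion, I would take $(e_1,\dots,e_g) \in \ZZ^g$ with $\sum e_i v(\beta_i) = 0$ for every $v \mid p$ and show that $\gamma \colonequals \prod \beta_i^{e_i}$ is a root of unity. The key observations are:
\begin{itemize}
\item Each $\beta_i = \alpha_i/\overline{\alpha}_i$ is an $\ell$-adic unit for every prime $\ell \neq p$ (because the Weil integer $\alpha_i$ is an $\ell$-adic unit), so $v(\gamma) = 0$ automatically at every place of $L$ not lying above $p$.
\item The hypothesis gives $v(\gamma) = 0$ at every place above $p$ as well, so $\gamma$ is a global unit of $L$.
\item At every archimedean place $w$ of $L$ one has $|\alpha_i|_w = |\overline{\alpha}_i|_w = \sqrt{q}$, hence $|\beta_i|_w = 1$ and $|\gamma|_w = 1$.
\end{itemize}
A global algebraic integer all of whose archimedean absolute values equal $1$ is a root of unity by Kronecker's theorem, so $\gamma \in \Gamma'_{\tors}$, which means $(e_1,\dots,e_g)$ lies in the torsion-saturation of $\ker(\phi)$, i.e.\ in $\ker(\phi)\otimes\QQ$ inside $\QQ^g$.

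Combining the two inclusions, $\ker(\phi)\otimes\QQ = V^\perp$, so
\[
\rank \Gamma' \;=\; \dim_\QQ\bigl(\ZZ^g\otimes\QQ/\ker(\phi)\otimes\QQ\bigr) \;=\; g - \dim_\QQ V^\perp \;=\; \dim_\QQ V,
\]
which is the claimed equality. The only genuinely substantive point is the Kronecker step; the rest is bookkeeping between the generators $\beta_i$ and the spanning vectors of $V$. I do not expect serious obstacles, but I would be careful to note that while $V$ is defined using only $p$-adic valuations, the argument silently uses the vanishing of $v(\beta_i)$ at all other finite places, which is where the $q$-Weil number condition enters.
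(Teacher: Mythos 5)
Your proposal is correct and follows essentially the same route as the paper: both reduce the statement to showing that $\prod_i \beta_i^{e_i}$ is a root of unity if and only if $\sum_i e_i v(\beta_i)=0$ for all $v \mid p$, and both prove the nontrivial direction by observing that $\gamma$ is a global unit (trivial valuations away from $p$ since the $\alpha_i$ are $q$-Weil integers, trivial above $p$ by hypothesis) with all archimedean absolute values equal to $1$, then invoking Kronecker's theorem. Your bookkeeping via $\ker(\phi)\otimes\QQ = V^\perp$ is just a slightly more explicit packaging of the same argument.
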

\begin{proof}
It suffices to check that for $e_1,\dots,e_g \in \ZZ$, $\beta_1^{e_1} \cdots \beta_g^{e_g}$ is a root of unity 
 if and only if 
\begin{equation}\label{eqn:newton-hyperplane}
e_1 v(\beta_1) + \cdots + e_g v(\beta_g) = 0,
\end{equation} 
for all extensions $v$ of the $p$-adic valuation to $L$. The former clearly implies the latter.

Suppose the latter holds. 
Let $\gamma:=\beta_1^{e_1} \cdots \beta_g^{e_g}$. 
We have $v(\gamma)=0$ for every $v$ above $p$ (by hypothesis). 
Also, $v(\gamma)=0$ for every place not $v$ not above $p$ (because each $\beta_i$ has trivial absolute value there).
This implies that $\gamma$ is not contained in any maximal ideal and hence is a unit. 
By the Riemann Hypothesis for varieties over finite fields, we know $\vert \psi(\gamma)\vert_{\infty}=1$ for every embedding $\psi\colon L\to \CC$.
By Kronecker's theorem, $\gamma$ is a root of unity.
\end{proof}

\begin{remark} \label{remark: low angle rank}
By Proposition~\ref{lemma: angle rank from slopes}, the angle rank of $A$ equals 0 if and only if $A$ is supersingular.
Since we are assuming $A$ is absolutely simple, this can only occur if $g=1$.

Similarly, the angle rank of $A$ is at most 1 if and only if $\beta_i/\beta_j$ is a root of unity for all $i,j$.
This means that the characteristic polynomial of Frobenius has the form $P(x^d)^{e}$ for some $d,e$; a typical example from the LMFDB is \href{https://www.lmfdb.org/Variety/Abelian/Fq/3/2/a\_a\_ac}{3.2.a\_a\_ac}.
\end{remark}

We assume hereafter that $A$ is not supersingular, so that \S\ref{sec:galois} applies.
We view $V$ as a $\QQ[G]$-module through the action described in \eqref{eqn:m-matrix}.
\begin{remark} \label{rmk:Newton matrix}
	We think of equation \eqref{eqn:newton-hyperplane} as defining a hyperplane in the variables $e_1,e_2,\ldots, e_g$, and have come to speaking of these as \emph{Newton hyperplanes}. 
	Collecting all of the coefficients into this matrix, one obtains a $\vert G \vert \times g$ matrix which we have come to call the \emph{Newton hyperplane matrix}. 
	The row space of the Newton hyperplane matrix is $V$, the \emph{Newton hyperplane representation}.
	See \S\ref{ss:hyperpl-arrang} for more discussion.
\end{remark}

\begin{remark}[Dodson's rank equals angle rank] \label{rmk:CM types}
Let $F$ be an algebraically closed field containing $\QQ$.
For $\widetilde{A}$ an abelian variety over $F$ with CM by an order in the field $K$, the action of this order on the tangent space of $\widetilde{A}$ defines a CM type $\Phi \subset \Hom(L, F)$ valued in $F$.
When $F = \overline{\QQ}_p$ and $\widetilde{A}$ is the lift of our abelian variety $A$ over $\FF_q$, we can express the Newton polygon of $A$ in terms of the CM type via the Shimura--Taniyama formula: for each $p$-adic place $v$ on $L$,
\[
v(\alpha) = \frac{\#\{\phi \in \Phi\colon \mbox{$\phi$ induces $v$ on $L$}\}}{[L_v:\QQ_p]}
\]
(e.g., see \cite[Proposition~2.1.4.2]{Chai2014}).

A consequence is that the \emph{rank} of $\widetilde{A}$ in the sense of
\cite[(3.1.0)]{Dodson1984} coincides with $\dim_{\QQ} V$, and hence by Proposition~\ref{lemma: angle rank from slopes} with the angle rank of $A$.
In particular, $A$ has maximal angle rank if and only if $\widetilde{A}$ is \emph{nondegenerate} in the sense of \cite{Dodson1984};
in other words, all of the Tate classes on powers of $A$ are Lefschetz classes if and only if all of the Hodge classes on powers of $\widetilde{A}$ are Lefschetz classes.  
\end{remark}

\subsection{Results of Tankeev and Lenstra--Zarhin}

The group $G$ acts on $V$ via its identification with a group of signed permutation matrices.
Some prior examples of the use of this action to control the angle rank are the following.

\begin{corollary}[Tankeev, \cite{Tankeev1984}] \label{corollary:tankeev}
If $g$ is prime, then $A$ has angle rank in $\{1, g-1, g\}$. 
\end{corollary}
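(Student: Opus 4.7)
The plan is to use Proposition~\ref{lemma: angle rank from slopes} to identify $V$ with a $\QQ[G]$-submodule of the signed permutation representation $\QQ^g$ and then classify the possible $G$-submodules using the primality of $g$. Since $A$ is absolutely simple, $\Gbar$ acts transitively on $\{1,\dots,g\}$ by Honda--Tate, and Remark~\ref{remark: low angle rank} together with non-supersingularity gives $\delta = \dim_\QQ V \geq 1$. It therefore suffices to show that any nonzero $G$-submodule of $\QQ^g$ has dimension in $\{1, g-1, g\}$.

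The first step is to consider the level set partition of $\{1,\dots,g\}$ defined by $i \sim j$ if and only if $c_i = c_j$ for every $c \in C$. Because $\Gbar$ is transitive and preserves this relation, its classes form a single $\Gbar$-orbit of common size dividing $g$; primality of $g$ forces the number $m$ of classes to be either $1$ or $g$.

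In the case $m = g$, the characters $\chi_i \colon c \mapsto (-1)^{c_i}$ of $C$ are pairwise distinct, so the $C$-isotypic components of $\QQ^g$ are the $g$ pairwise non-isomorphic one-dimensional lines $\QQ e_i$. Any $G$-submodule must then be a direct sum $\bigoplus_{i \in S} \QQ e_i$ with $S \subseteq \{1,\dots,g\}$ stable under $\Gbar$; by transitivity $S$ is empty or everything, so $\dim V \in \{0, g\}$.

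In the case $m = 1$, $C = \{0, c_\ast\}$ is the trivial code, and $\QQ^g = W_+ \oplus W_1$ decomposes $G$-equivariantly into the line spanned by $(1,\dots,1)$ and the sum-zero hyperplane. Define $\epsilon \colon G \to \{\pm 1\}$ to send $h$ to $+1$ or $-1$ according as the codeword of $h$ is $0$ or $c_\ast$; this is a character of $G$ because $c_\ast$ is $\Gbar$-fixed, and a direct character computation yields $W_1 \cong \epsilon \otimes \pi^\ast \mathrm{St}$, where $\mathrm{St}$ is the standard $(g-1)$-dimensional augmentation representation of $\Gbar$ acting on $\{1,\dots,g\}$ and $\pi \colon G \twoheadrightarrow \Gbar$ is the projection. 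Since $\Gbar$ is transitive of prime degree it contains a $g$-cycle, and the restriction of $\mathrm{St}$ to the cyclic subgroup generated by this $g$-cycle is the $\QQ$-irreducible cyclotomic representation $\QQ(\zeta_g)$; hence $\mathrm{St}$, $\pi^\ast \mathrm{St}$, and $W_1$ are all $\QQ$-irreducible as $G$-representations, so the only $G$-submodule dimensions in $\QQ^g$ are $\{0, 1, g-1, g\}$. Excluding $0$ by non-supersingularity completes the argument. The main technical subtlety is the verification that $\epsilon$ is genuinely a character of $G$ despite the central extension $1 \to C \to G \to \Gbar \to 1$ not necessarily splitting; this works only because $c_\ast$ is $\Gbar$-invariant under the conjugation action on $\FF_2^g$.
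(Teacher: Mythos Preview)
Your argument contains a genuine gap in the $m=1$ case. The decomposition $\QQ^g = W_+ \oplus W_1$ with $W_+ = \QQ(1,\dots,1)$ and $W_1$ the sum-zero hyperplane is not $G$-equivariant in general: a signed permutation matrix sends $(1,\dots,1)$ to a $\{\pm 1\}$-vector, and this lies in $W_+$ only when the sign vector is $0$ or $c_\ast$. Nothing forces every element of $G$ to have its sign vector in $C$. For instance, with $g=5$ take $h$ to be a $5$-cycle carrying sign vector $(1,0,0,0,0)$; then $h^5=-I=c_\ast$, so $G=\langle h\rangle\cong\ZZ/10\ZZ$ has $C=\{0,c_\ast\}$ and $m=1$, yet $h(1,1,1,1,1)\notin W_+$. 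Your definition of $\epsilon$ (``according as the codeword of $h$ is $0$ or $c_\ast$'') suffers the same defect: the sign vector of a general $h\in G$ need not lie in $C$, so the rule does not assign a value. The $\Gbar$-invariance of $c_\ast$ only makes the extension $1\to C\to G\to\Gbar\to 1$ central; it neither splits the sequence nor constrains the sign vectors of lifts.

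The paper's proof sidesteps all of this with a single observation: since $g$ is prime and $\Gbar$ is transitive, $g$ divides $|G|$, so $G$ contains a cyclic subgroup $Z$ of order $g$. As a $\QQ[Z]$-module, $\QQ^g$ decomposes as a line plus the $(g-1)$-dimensional cyclotomic piece, both irreducible over $\QQ$; since $V$ is in particular a $Z$-submodule, $\dim_\QQ V\in\{0,1,g-1,g\}$. No case split on the code is needed, and your $m=g$ analysis, while correct, is superfluous. In fact your own final step---restricting $\mathrm{St}$ to the cyclic subgroup generated by a $g$-cycle---is exactly this idea; applied directly to $\QQ^g$ rather than to the ill-defined summand $W_1$, it yields the result in one line.
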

\begin{proof}
This follows from Proposition~\ref{lemma: angle rank from slopes} because $G$ contains a cyclic group of order $g$ acting as a permutation of the coordinates, and as a representation of this group $\QQ^g$ decomposes as the direct sum of the diagonal subspace and the trace-zero subspace.
\end{proof}

The abelian variety $A$ is \emph{almost ordinary} if its Newton slopes consist of $0$ and $1$ each with multiplicity $g-1$ and $1/2$ with multiplicity 2. (The terminology
indicates that this condition corresponds to the unique codimension-1 stratum of the Newton polygon stratification on the moduli space of abelian varieties with a fixed polarization degree.)

\begin{corollary}[Lenstra--Zarhin, \cite{Lenstra1993}] \label{corollary:almost ordinary}
Suppose $A$ is almost ordinary.
\begin{enumerate}
\item[(a)]
If $g$ is even, then $A$ has maximal angle rank.
\item[(b)]
If $g$ is odd, then $A$ has angle rank at least $g-1$.
\end{enumerate}
\end{corollary}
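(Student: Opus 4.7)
My plan is to compute $\dim_\QQ V$ (which equals the angle rank by Proposition~\ref{lemma: angle rank from slopes}) by reducing the generating slope vectors modulo $2$. For almost ordinary $A$, the Newton slopes of $P(T)$ are $0$ and $1$ each with multiplicity $g-1$ and $1/2$ with multiplicity $2$. Pairing the eigenvalues as $\{\alpha_i,\overline{\alpha_i}\}$, exactly one index $i_0(v)$ corresponds to the pair of slope-$1/2$ eigenvalues (giving $v(\beta_{i_0(v)})=0$), and for the remaining $g-1$ indices one of $v(\alpha_i),v(\overline{\alpha_i})$ equals $0$ and the other equals $1$ (giving $v(\beta_i)=\pm 1$). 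Consequently every slope vector $w(v)=(v(\beta_1),\ldots,v(\beta_g))$ lies in $\{-1,0,1\}^g$ with exactly one zero entry (at position $i_0(v)$), so its reduction modulo $2$ is the vector $\mathbf{1}-e_{i_0(v)}\in\FF_2^g$, where $\mathbf{1}$ is the all-ones vector and $e_j$ is the $j$-th standard basis vector.

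Next, by absolute simplicity of $A$ and Honda--Tate, the group $\overline{G}$ acts transitively on $\{1,\ldots,g\}$ (see \S\ref{sec:galois}), so as $v$ varies the index $i_0(v)$ takes every value in $\{1,\ldots,g\}$. Using the elementary fact that $\FF_2$-linear independence of mod-$2$ reductions of integer vectors implies $\QQ$-linear independence of those vectors, one obtains
\[ \dim_\QQ V \,\geq\, \dim_{\FF_2}\Span_{\FF_2}\{\mathbf{1}-e_j : j=1,\ldots,g\}. \]
I then compute the right-hand side directly: since $(\mathbf{1}-e_j)+(\mathbf{1}-e_k)=e_j+e_k$ in $\FF_2^g$, the span contains the even-weight subspace $\{x\in\FF_2^g:\sum_i x_i=0\}$, which has dimension $g-1$. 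Each $\mathbf{1}-e_j$ has weight $g-1$, which is even precisely when $g$ is odd and odd precisely when $g$ is even. Therefore the $\FF_2$-span has dimension $g-1$ when $g$ is odd (yielding (b)) and dimension $g$ when $g$ is even (yielding (a), since $\dim_\QQ V\leq g$ is automatic).

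The only real bookkeeping point is the $\{-1,0,1\}$-description of slope vectors in the almost ordinary case, which follows immediately from the slope identity $v(\beta_i)=v(\alpha_i)-v(\overline{\alpha_i})$; the remainder is short $\FF_2$-linear algebra. I note that this strategy should extend to the ``Generalized Lenstra--Zarhin'' statement of the introduction, since the slopes there lie in $\ZZ_{(2)}\cup\{1/2\}$ and any element of $\ZZ_{(2)}$ reduces unambiguously to $0$ or $1$ in $\ZZ_{(2)}/2\ZZ_{(2)}\cong\FF_2$.
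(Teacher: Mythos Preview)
Your proof is correct and follows essentially the same approach as the paper: both reduce to Proposition~\ref{lemma: angle rank from slopes}, observe that the slope vectors in the almost ordinary case are $\{-1,0,1\}$-vectors reducing mod $2$ to the rows of the $g\times g$ matrix $J-I$ (all-ones minus identity), invoke transitivity of $\overline{G}$ to see that every such row occurs, and bound $\dim_\QQ V$ below by the $\FF_2$-rank of that matrix. The only difference is that you spell out the $\FF_2$-rank computation explicitly (via the even-weight subspace), whereas the paper leaves it implicit.
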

The angle rank $g-1$ can indeed occur (even excluding the trivial case $g=1$); an example from the LMFDB is \href{https://www.lmfdb.org/Variety/Abelian/Fq/3.2.a\_ab\_ac}{3.2.a\_ab\_ac}.
\begin{proof}
We again use Proposition~\ref{lemma: angle rank from slopes} to reduce to computing the dimension of $V$.
For every extension $v$ of the $p$-adic valuation to $L$, the vector $(v(\beta_1),\dots,v(\beta_g)$) has one entry equal to 0 (corresponding to the slope $1/2$) and the rest equal to $\pm 1$.
Since $\overline{G}$ acts transitively, $V$ contains vectors over $\ZZ$ congruent modulo 2 to all of the rows of the matrix
\[
M = 
\begin{pmatrix}
0 & 1 & \cdots & 1 & 1 \\
1 & 0 & \cdots & 1 & 1 \\
\vdots & \vdots &\ddots & \vdots &\vdots \\
1 & 1 &\cdots & 0 & 1 \\
1 & 1 &\cdots & 1 & 0
\end{pmatrix}
\]
and so the dimension of $V$ can be bounded below by the rank of $M$ over $\FF_2$.
\end{proof}

The same argument yields the following generalization, as indicated in \cite[Remark~6.7]{Lenstra1993}.
\begin{corollary} \label{T:generalized LZ}
Suppose that the Newton slopes of $A$ consist of $1/2$ with multiplicity $2$ and all other slopes in $\ZZ_{(2)}$. 
\begin{enumerate}
\item[(a)]
If $g$ is even, then $A$ has maximal angle rank.
\item[(b)]
If $g$ is odd, then $A$ has angle rank at least $g-1$.
\end{enumerate}
\end{corollary}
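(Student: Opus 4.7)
The plan is to follow the argument for Corollary~\ref{corollary:almost ordinary} almost verbatim, upgrading the $\pm 1$ entries to arbitrary $2$-adic units. By Proposition~\ref{lemma: angle rank from slopes} it suffices to show $\dim_{\QQ} V = g$ in case (a) and $\dim_{\QQ} V \geq g-1$ in case (b).

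First I would analyze $v(\beta_i) = 2v(\alpha_i) - 1$ in terms of the Newton slopes. If $v(\alpha_i) = 1/2$ then $v(\beta_i) = 0$; otherwise $v(\alpha_i) = a/b \in \ZZ_{(2)}$ with $b$ odd, so $v(\beta_i) = (2a - b)/b$ has odd numerator and odd denominator, hence lies in $\ZZ_{(2)}^{\times}$. Because the slope $1/2$ has multiplicity $2$, it occurs for exactly one pair $\{\alpha_i, \overline{\alpha}_i\}$, so every vector $(v(\beta_1), \ldots, v(\beta_g))$ has exactly one coordinate equal to $0$ and $g-1$ coordinates in $\ZZ_{(2)}^{\times}$. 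Using transitivity of $\overline{G}$ on $\{1, \ldots, g\}$ (which follows from Honda--Tate since $A$ is absolutely simple), I translate a fixed extension $v$ by elements of $G$ to obtain, for each index $i$, an extension $v_i$ of the $p$-adic valuation with $v_i(\beta_i) = 0$. The resulting vectors $w_1, \ldots, w_g \in V \cap \ZZ_{(2)}^g$ assemble into a matrix $M$ with zero diagonal and off-diagonal entries in $\ZZ_{(2)}^{\times}$; reducing modulo $2$ yields the matrix $J - I$ over $\FF_2$, where $J$ is the all-ones matrix.

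To finish, I would compute $\rk_{\FF_2}(J - I)$: since $J^2 = gJ$, the all-ones vector $\mathbf{1}$ satisfies $(J-I)\mathbf{1} = (g-1)\mathbf{1}$, while on the $(g-1)$-dimensional complementary subspace of coordinate-sum-zero vectors, $J - I$ acts as $-I$, which is the identity over $\FF_2$. Hence $J - I$ has rank $g$ over $\FF_2$ when $g$ is even and rank $g-1$ when $g$ is odd, so $\dim_{\QQ} V \geq \rk_{\FF_2}(M \bmod 2)$ gives the desired bounds. The only real obstacle is confirming that each off-diagonal entry of $M$ is a genuine $2$-adic unit --- not merely an element of $\ZZ_{(2)}$ --- so that the mod-$2$ reduction retains full information; this is exactly the role played by the hypothesis that the non-$1/2$ slopes all lie in $\ZZ_{(2)}$, and once it is verified the rest of the argument is identical to the almost ordinary case.
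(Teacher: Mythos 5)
Your proof is correct and follows the paper's argument essentially verbatim: reduce to bounding $\dim_\QQ V$ via Proposition~\ref{lemma: angle rank from slopes}, note that the hypothesis forces each slope vector to have exactly one zero entry and $g-1$ entries in $\ZZ_{(2)}^\times$, use transitivity of $\overline{G}$ to produce vectors in $V\cap\ZZ_{(2)}^g$ reducing modulo $2$ to the rows of $J-I$, and bound $\dim_\QQ V$ below by $\rk_{\FF_2}(J-I)$ (your proposal is in fact more explicit than the paper, which leaves the $2$-adic-unit verification and the rank of $J-I$ implicit). One tiny imprecision: for $g$ even the coordinate-sum-zero subspace of $\FF_2^g$ contains $\mathbf{1}$ and so is not complementary to $\langle\mathbf{1}\rangle$, but your conclusion $\rk_{\FF_2}(J-I)=g$ still holds, e.g.\ because $\det(J-I)=(g-1)(-1)^{g-1}$ is odd.
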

An illustrative example is the Newton slopes
\[
\{1/3, 1/3, 1/3, 1/2, 1/2, 2/3, 2/3, 2/3\}.
\]
\begin{proof}
Under this hypothesis, $V$ again contains vectors over $\ZZ_{(2)}$ congruent modulo 2 to all of the rows of the matrix $M$ from the proof of Corollary~\ref{corollary:almost ordinary}. Thus the dimension of $V$ is again bounded below by the rank of $M$ over $\FF_2$.
\end{proof}

We also mention an easy complementary result.
\begin{corollary}
Suppose that the Newton slopes of $A$ consist of $0$ and $1$ each with multiplicity $1$ and $1/2$ with multiplicity $2(g-1)$. Then $A$ has maximal angle rank.
\end{corollary}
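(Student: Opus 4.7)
The plan is to mimic the strategy used in the proofs of Corollaries~\ref{corollary:almost ordinary} and~\ref{T:generalized LZ}, but here the slope profile will give us $V = \QQ^g$ directly, without needing to compute an $\FF_2$-rank of an auxiliary matrix.

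First I would translate the slope data into information about the vectors $(v(\beta_1),\dots,v(\beta_g))$. Since $\beta_i = \alpha_i/\overline{\alpha}_i$ and $v(\alpha_i)+v(\overline{\alpha}_i)=1$ for every $p$-adic valuation $v$ on $L$ (using the normalization $v(q)=1$), we have
\[
v(\beta_i) = 2v(\alpha_i) - 1.
\]
Thus $v(\beta_i) = 0$ precisely when $v(\alpha_i) = 1/2$, while $v(\beta_i) = \pm 1$ when $\{v(\alpha_i), v(\overline{\alpha}_i)\} = \{0,1\}$. The hypothesis on the Newton polygon says that for each valuation $v$ above $p$, the multiset $\{v(\alpha_1),\dots,v(\alpha_{2g})\}$ consists of a single $0$, a single $1$, and $2(g-1)$ copies of $1/2$. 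Because conjugate pairs must sum to $1$, the unique $\alpha_i$ with $v(\alpha_i) = 0$ and the unique one with $v(\alpha_j) = 1$ are conjugate to each other, so they determine a single index $i \in \{1,\dots,g\}$. Consequently, the vector $(v(\beta_1),\dots,v(\beta_g))$ has exactly one nonzero entry, equal to $\pm 1$, in the position $i$.

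Next I would invoke the transitivity of the action of $\overline{G}$ on $\{1,\dots,g\}$, which holds because $A$ is absolutely simple (by Honda--Tate, as recalled in \S\ref{sec:galois}). Since $V$ is stable under $G$, and all extensions of the $p$-adic valuation to $L$ are Galois conjugates of one another, the position of the nonzero entry can be sent to any $i \in \{1,\dots,g\}$ by some element of $\overline{G}$. Hence $V$ contains, for each $i$, a vector of the form $\pm e_i$. These $g$ vectors are clearly a basis of $\QQ^g$, so $V = \QQ^g$, and Proposition~\ref{lemma: angle rank from slopes} yields that $A$ has maximal angle rank.

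There is no real obstacle: the only mild point is verifying that each $v(\beta_i)$ really is $\pm 1$ or $0$ with a single $\pm 1$, and this drops out of the observation that the (0,1) slope pair must live inside a single conjugate pair $\{\alpha_i,\overline{\alpha}_i\}$. Note that unlike in Corollary~\ref{corollary:almost ordinary}, no rank computation over $\FF_2$ is needed here; the slope vectors land directly on the coordinate axes.
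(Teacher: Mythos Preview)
Your proof is correct and follows essentially the same approach as the paper: you show that each slope vector $(v(\beta_1),\dots,v(\beta_g))$ is supported in a single coordinate, then use transitivity of $\overline{G}$ to conclude $V=\QQ^g$. The paper's proof is simply a two-sentence version of yours, omitting the explicit computation $v(\beta_i)=2v(\alpha_i)-1$ and the verification that the $0$- and $1$-slopes occupy a single conjugate pair.
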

\begin{proof}
Under this hypothesis, $V$ contains a vector supported in a single coordinate. Since $\overline{G}$ acts transitively on $\{1,\dots,g\}$, this implies that $V = \QQ^g$.
\end{proof}

\subsection{Effect of the code}
We look at the effect of the code on the angle rank. 
We exploit the delicate property that, up to sign, the action of $\overline{G}$ on $C \subset \FF_2^g$ is the same as the action of $G$ on $V \subset \QQ^g$ through the signed permutation representation $h\mapsto M(h)$ described in Section \ref{sec:galois}.
This can be used to identify many new cases of maximal angle rank, including cases which are left ambiguous by the results of Tankeev and Lenstra--Zarhin.

\begin{lemma} \label{lemma: code partition}
There exists a unique partition
\[
\{1,\dots,g\} = \bigsqcup_{i=1}^m T_i
\]
such that each codeword of $C$ is constant on each $T_i$ and each $T_i$ is maximal for this property.
In particular, $C$ is trivial if and only if $m=1$.
(Note that this partition, being unique, must be preserved by $\overline{G}$.)
\end{lemma}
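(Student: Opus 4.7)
The plan is to produce the partition explicitly as the equivalence classes of a natural relation on $\{1,\dots,g\}$, verify maximality and uniqueness by a direct argument, and then deduce $\overline{G}$-invariance as a formal consequence of uniqueness.

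First I would define a relation $\sim$ on $\{1,\dots,g\}$ by declaring $i\sim j$ if and only if $c(i)=c(j)$ for every $c\in C$. Since $C$ is a subgroup (in particular closed under taking fixed elementwise comparisons), this is manifestly reflexive, symmetric, and transitive, so it partitions $\{1,\dots,g\}$ into equivalence classes $T_1,\dots,T_m$. By construction every codeword is constant on each $T_i$.

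Next I would check maximality and uniqueness together. If $T\subseteq\{1,\dots,g\}$ is any subset on which every codeword is constant, then for any $i,j\in T$ and any $c\in C$ we have $c(i)=c(j)$, so $i\sim j$; thus $T$ lies inside a single equivalence class, forcing the $T_i$ to be maximal. For uniqueness, any partition satisfying both defining properties consists of sets contained in the $T_i$ (by the constancy condition) and maximal among such sets (by the maximality condition), hence must agree with $\{T_1,\dots,T_m\}$. The triviality criterion is then immediate: $m=1$ means every codeword is constant on all of $\{1,\dots,g\}$, so $C\subseteq\{0,\vec{1}\}$, and combined with the fact that complex conjugation puts $\vec{1}\in C$ this yields exactly the trivial code; the converse is obvious.

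Finally, for the parenthetical remark on Galois invariance: the action of $\overline{G}$ on $\FF_2^g$ by permutation of coordinates preserves $C$, because $C$ sits as a normal subgroup of $G$ in the diagram \eqref{eq:imprimitivity sequence} and $\overline{G}=G/C$ acts on it by conjugation, which visibly coincides with the coordinate permutation. Hence for any $h\in\overline{G}$, the image $\{h(T_1),\dots,h(T_m)\}$ is again a partition of $\{1,\dots,g\}$ on whose parts every codeword is constant and each part is maximal for this property; uniqueness then forces $h$ to permute the $T_i$.

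The argument is essentially the standard construction of the coarsest partition refining a family of $\FF_2$-valued functions, so I do not anticipate any serious obstacle. The only mildly non-obvious ingredient is the $\overline{G}$-invariance of $C$ used at the end, but this is built into the exact sequence \eqref{eq:imprimitivity sequence} and requires no additional work.
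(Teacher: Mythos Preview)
Your proof is correct and follows essentially the same approach as the paper: where the paper takes the $T_i$ to be the maximal subsets on which all codewords are constant and checks directly that these partition $\{1,\dots,g\}$, you instead phrase the construction as the equivalence classes of the relation $i\sim j \iff c(i)=c(j)$ for all $c\in C$, which is the same thing. You are in fact more thorough than the paper in spelling out the triviality criterion and the $\overline{G}$-invariance.
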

\begin{proof}
We take the $T_i$ to be the maximal nonempty subsets of $\{1,2,\dots,g\}$ with the property that each codeword of $C$ is constant on each $T_i$.
Since singleton sets have this property, the $T_i$ form a covering of $\{1,2,\dots,g\}$. To see that they are pairwise disjoint, note that if $T_i \cap T_j \neq \emptyset$,
then every codeword of $C$ is also constant on $T_i \cup T_j$; this is only possible if $T_i = T_j$.
\end{proof}

In what follows we identify $\QQ^g$ with $\QQ^{\oplus \lbrace 1,2,\ldots, g\rbrace}$ and write 
\[
\QQ^g = \bigoplus_{i=1}^m\QQ^{\oplus T_i}
\]
for the partition $T_1,\dots,T_m$ provided by Lemma~\ref{lemma: code partition}.
\begin{lemma} \label{lemma: Newton space partition}
For $i=1,\dots,m$, set $V_i = V \cap \QQ^{\oplus T_i}$. Then we have a decomposition $V = \bigoplus_{i=1}^m V_i$ of $\QQ$-vector spaces (without the action of $G$).
\end{lemma}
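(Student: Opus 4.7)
The plan is to exploit the fact that the subgroup $C \subseteq G$ acts on $V$ by diagonal sign changes, and to decompose $V$ using orthogonal characters of $C$ supplied by the parts of the level set partition. Indeed, for $c = (c_1,\dots,c_g) \in C$, the matrix $M(c)$ is diagonal with entries $(-1)^{c_j}$, because $c$ fixes $L^+$ and therefore sends each $\alpha_j$ to either $\alpha_j$ or $\overline{\alpha}_j$. Thus on $V \subseteq \QQ^g$, each $c \in C$ acts coordinatewise by $(w_1,\dots,w_g) \mapsto ((-1)^{c_1}w_1,\dots,(-1)^{c_g}w_g)$.

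For each $i \in \{1,\dots,m\}$ I define a character $\chi_i \colon C \to \{\pm 1\}$ by the rule $\chi_i(c) = (-1)^{c_j}$ for any $j \in T_i$; this is well defined and multiplicative because every codeword of $C$ is constant on $T_i$. The central auxiliary claim is that the characters $\chi_1,\dots,\chi_m$ are \emph{pairwise distinct}. For if $\chi_i = \chi_k$ with $i \neq k$, then every codeword takes the same value on $T_i$ as on $T_k$, hence is constant on $T_i \cup T_k$, contradicting the maximality clause in Lemma~\ref{lemma: code partition}. This is the step the proof really hinges on; everything else is standard character theory.

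Next I form the projector $\pi_i \colonequals \frac{1}{|C|} \sum_{c \in C} \chi_i(c)\cdot c \in \QQ[C]$ and observe $\pi_i(V) \subseteq V$ since $V$ is $G$-stable. A one-line coordinate computation, using that $(-1)^{c_j} = \chi_k(c)$ whenever $j \in T_k$, gives
\[
(\pi_i w)_j \;=\; w_j \cdot \frac{1}{|C|} \sum_{c \in C} \chi_i(c)\chi_k(c) \qquad (j \in T_k),
\]
which by orthogonality of characters equals $w_j$ if $i = k$ and $0$ otherwise. Hence $\pi_i(w)$ is supported on $T_i$ and lies in $V \cap \QQ^{\oplus T_i} = V_i$, while $\sum_{i=1}^m \pi_i(w) = w$.

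It follows that $V = V_1 + \cdots + V_m$, and the sum is direct because the ambient subspaces $\QQ^{\oplus T_i}$ are themselves direct summands of $\QQ^g$. The resulting decomposition is only one of $\QQ$-vector spaces, since $\overline{G}$ may permute the parts $T_i$ nontrivially and therefore permute the summands $V_i$; this is consistent with the statement of the lemma.
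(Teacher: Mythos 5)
Your proof is correct, but it takes a genuinely different route from the paper's. You decompose $V$ as the $\chi_i$-isotypic decomposition for the sign characters $\chi_i$ of $C$ attached to the parts $T_i$, using the averaging idempotents $\pi_i = \frac{1}{|C|}\sum_{c\in C}\chi_i(c)\,c$ and orthogonality of characters; the paper instead argues by support minimization: a nonzero $v \in V$ of minimal support must have $\supp v$ inside a single $T_i$, since otherwise a codeword $c$ nonconstant on $\supp v$ makes $v + c(v)$ a nonzero element of $V$ with strictly smaller nonempty support. Both arguments hinge on exactly the same structural fact, which you correctly isolate and which follows from the maximality clause of Lemma~\ref{lemma: code partition}: distinct parts $T_i \neq T_k$ are separated by some codeword, i.e., your characters $\chi_1,\dots,\chi_m$ are pairwise distinct. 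Your version buys explicit $C$-equivariant projectors onto the $V_i$ and makes the decomposition conceptually transparent (each $\QQ^{\oplus T_i}$ is the $\chi_i$-isotypic component of $\QQ^g$ as a $C$-module, so $V_i$ is automatically the $\chi_i$-isotypic component of $V$, and directness is immediate); the paper's version is more elementary, using no character theory, and avoids even writing down the $C$-module structure explicitly. One small point worth noting either way: your argument, unlike the paper's, does not need the reduction from arbitrary elements of $V$ to minimal-support elements, since $\sum_i \pi_i = \mathrm{id}$ gives the decomposition of every vector at once.
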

\begin{proof}
It is obvious that the sum of the $V_i$ is contained in $V$; it thus suffices to show that every element of $V$ can be decomposed into a sum over the $V_i$.
It further suffices to check that for any nonzero $v \in V$ whose support is minimal with respect to inclusion, we have $\supp v \subseteq V_i$ for some $i$. 
Suppose the contrary: then by Lemma~\ref{lemma: code partition}, there is a codeword $c \in C$ which is not constant on $\supp v$.
Then $v + c(v)$ is an element of $V$ with support $\supp v \setminus \supp c$, a proper nonempty subset of $\supp v$; this yields the desired contradiction.
\end{proof}

\begin{corollary} \label{remark: vector space dimension}
Suppose that $g>1$ and set $g' = g/m$. Then the angle rank of $A$ has the form $im$ for some $i \in \{1,\dots,g'\}$; in particular, it must be at least $m$.
\end{corollary}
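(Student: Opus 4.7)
The plan is to exploit the direct-sum decomposition $V = \bigoplus_{i=1}^m V_i$ from Lemma~\ref{lemma: Newton space partition}, together with the symmetry imposed by the Galois action: the main idea is that $\overline{G}$ acts transitively on the set of blocks $\{T_1,\dots,T_m\}$, which forces all the $V_i$ to share a common $\QQ$-dimension and hence forces $\dim V$ to be divisible by $m$.

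First I would verify transitivity on blocks. The action of $\overline{G}$ on $\{1,\dots,g\}$ is transitive by Honda--Tate (as recorded in \S\ref{sec:galois}), and the partition of Lemma~\ref{lemma: code partition} is unique and so must be preserved by $\overline{G}$. Any two blocks $T_i, T_j$ can therefore be interchanged by choosing any $h \in \overline{G}$ that sends some point of $T_i$ to a point of $T_j$; in particular every block has size $g' = g/m$.

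Next I would translate this into a statement about the $V_i$. By \eqref{eqn:m-matrix}, the signed-permutation action of $G$ on $\QQ^g$ sends the standard basis vector $e_j$ to $\pm e_{\bar h(j)}$, where $\bar h$ is the image of $h$ in $\overline{G}$, and the sign plays no role in determining which coordinate subspace $\QQ^{\oplus T_i}$ a vector lies in. Thus $G$ permutes the subspaces $\QQ^{\oplus T_i}$ according to the same pattern by which $\overline{G}$ permutes the blocks, and since $V$ is $G$-stable, the subspaces $V_i = V \cap \QQ^{\oplus T_i}$ are permuted transitively by $G$. All $V_i$ therefore share a common $\QQ$-dimension $i_0$, giving $\dim_{\QQ} V = m \cdot i_0$. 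The bound $i_0 \leq g'$ is immediate from $V_i \subseteq \QQ^{\oplus T_i}$, while $i_0 \geq 1$ follows because the standing non-supersingularity assumption forces $\dim V \geq 1$ via Remark~\ref{remark: low angle rank}. Combining with Proposition~\ref{lemma: angle rank from slopes}, which identifies $\dim V$ with the angle rank, yields the conclusion.

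I do not anticipate a serious obstacle; the argument is essentially formal once one accepts the transitivity of the block action. The one place to pay attention is the verification that the signed-permutation action of $G$ descends, at the level of coordinate subspaces, to the unsigned permutation action of $\overline{G}$, but this is transparent from \eqref{eqn:m-matrix}.
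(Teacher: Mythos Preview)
Your proposal is correct and follows essentially the same route as the paper's own proof: both use the decomposition $V = \bigoplus_i V_i$ from Lemma~\ref{lemma: Newton space partition}, the transitive action of $G$ permuting the $V_i$ (forcing equal dimensions), and Remark~\ref{remark: low angle rank} to rule out $V_1 = 0$. You have simply spelled out in more detail why $G$ permutes the $V_i$ transitively via the block action of $\overline{G}$, which the paper asserts in a single sentence.
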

\begin{proof}
In Lemma~\ref{lemma: Newton space partition}, the spaces $V_i$ are all carried to each other by various elements of $G$. We thus have $m | g$ and 
\[
\dim_\QQ V = m \dim_\QQ V_1.
\]
By Remark~\ref{remark: low angle rank}, for $g > 1$ we cannot have $V_1 = 0$, so $\dim_\QQ V_1 \in \{1,\dots,g'\}$.
\end{proof}

\begin{corollary} \label{corollary: rank g-1}
If the angle rank of $A$ equals $1$ or $g-1$, then $m = 1$ and so $C$ is trivial. In particular,
in either the theorem of Tankeev (Corollary~\ref{corollary:tankeev}) or the theorem of Lenstra--Zarhin (Corollary~\ref{corollary:almost ordinary}), 
if $C$ has nontrivial code, then $A$ has maximal angle rank.
\end{corollary}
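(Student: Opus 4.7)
The plan is to deduce everything from Corollary~\ref{remark: vector space dimension}, which states that the angle rank $\delta(A)$ has the form $im$ for some $i \in \{1,\dots,g/m\}$. In particular, this means $m$ must divide $\delta(A)$ (and of course $m$ divides $g$).

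Armed with this divisibility, both parts of the first assertion are immediate. If $\delta(A) = 1$, then $m \mid 1$ forces $m = 1$. If $\delta(A) = g-1$, then $m$ divides both $g$ and $g-1$; since $\gcd(g,g-1) = 1$, this again forces $m = 1$. By Lemma~\ref{lemma: code partition}, $m=1$ is equivalent to $C$ being trivial, so $C$ is trivial in each case.

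For the second assertion, one combines the first with the shape of the possible angle ranks in the two cited theorems. Tankeev (Corollary~\ref{corollary:tankeev}) says that when $g$ is prime, $\delta(A) \in \{1, g-1, g\}$; Lenstra--Zarhin (Corollary~\ref{corollary:almost ordinary}) says that in the almost ordinary case $\delta(A) \geq g-1$, so $\delta(A) \in \{g-1, g\}$ (the case $g$ even being already maximal unconditionally). If $C$ is nontrivial, then $m \geq 2$, and the first part of the corollary rules out $\delta(A) \in \{1, g-1\}$. The only remaining value in each list is $\delta(A) = g$, i.e., $A$ has maximal angle rank.

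There is essentially no obstacle here: the whole argument is a short divisibility/bookkeeping exercise once Corollary~\ref{remark: vector space dimension} is in hand. The only thing to double-check is the bijection between triviality of $C$ and $m=1$, which is already part of Lemma~\ref{lemma: code partition}, so no further input is required.
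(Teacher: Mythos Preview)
Your argument is correct and is exactly the intended one: the paper states this corollary without proof because it follows immediately from Corollary~\ref{remark: vector space dimension} via the divisibility $m \mid \delta(A)$ and $m \mid g$, together with the equivalence $m=1 \Leftrightarrow C$ trivial from Lemma~\ref{lemma: code partition}. There is nothing to add.
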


On a related note, we have the following.
\begin{theorem} \label{theorem: primitive to maximal angle rank}
Suppose that $\overline{G}$ acts primitively on $\{1,\dots,g\}$ and $C$ is nontrivial. Then $A$ has maximal angle rank.
\end{theorem}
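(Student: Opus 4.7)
The plan is to combine the partition supplied by Lemma~\ref{lemma: code partition} with the primitivity hypothesis and Corollary~\ref{remark: vector space dimension} to force the angle rank to be maximal, essentially without any computation. Note first that the nontriviality hypothesis $\dim_{\F_2} C \geq 2$ already requires $g \geq 2$, so the corollary applies.

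First, I would invoke Lemma~\ref{lemma: code partition} to obtain the unique partition $\{1,\dots,g\} = \bigsqcup_{i=1}^m T_i$ on whose parts every codeword of $C$ is constant, remembering that this partition is automatically preserved by $\overline{G}$. Since $C$ is nontrivial, there is a codeword $c \in C$ distinct from both the zero vector and the all-ones vector, and such a $c$ cannot be constant on all of $\{1,\dots,g\}$; this forces $m \geq 2$.

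Next, I would play primitivity against the existence of this nontrivial $\overline{G}$-invariant partition. Because $\overline{G}$ acts transitively on $\{1,\dots,g\}$ and preserves the partition, it must permute the parts $T_i$ transitively, so all of the $T_i$ have the same cardinality $g/m$, and each is a block of imprimitivity in the sense of Section~\ref{sec:galois}. Primitivity forbids any block of size strictly between $2$ and $g$; since $m \geq 2$ already excludes $|T_i| = g$, the only remaining possibility is $|T_i| = 1$ for all $i$, i.e.\ $m = g$.

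Finally, Corollary~\ref{remark: vector space dimension} immediately yields that the angle rank is at least $m = g$, hence exactly $g$, as desired. The main ``obstacle'' here is conceptual rather than technical: once one observes that Lemma~\ref{lemma: code partition} already produces a $\overline{G}$-invariant partition, primitivity is exactly the additional ingredient needed to collapse $m$ all the way to $g$ once we know $m \geq 2$, and there is no further work to do.
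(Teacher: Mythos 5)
Your proposal is correct and follows essentially the same route as the paper: primitivity of $\overline{G}$ forces the $\overline{G}$-invariant partition from Lemma~\ref{lemma: code partition} to have $m=1$ or $m=g$, nontriviality of $C$ rules out $m=1$, and Corollary~\ref{remark: vector space dimension} then gives angle rank $g$. The only difference is that you spell out the block-of-imprimitivity argument and the equivalence ``$C$ trivial iff $m=1$'' that the paper cites directly.
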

\begin{proof}
Note that nontriviality of $C$ forces $g>1$.
Since $\overline{G}$ is primitive, the only way it can preserve the partition from Lemma~\ref{lemma: code partition} is to have $m=1$ or $m=g$.
The case $m=1$ corresponds to $C$ being trivial, which we are forbidding here.
If $m=g$, then by Corollary~\ref{remark: vector space dimension} the angle rank must equal $g$.
\end{proof}

\begin{corollary}
	Suppose $g$ is prime. 
	If $C$ is nontrivial, then $A$ has maximal angle rank. 
\end{corollary}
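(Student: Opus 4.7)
The plan is to derive this immediately from Theorem~\ref{theorem: primitive to maximal angle rank}. The only thing to verify is that under the hypothesis that $g$ is prime, the action of $\overline{G}$ on $\{1,\dots,g\}$ is automatically primitive. Once this is established, applying Theorem~\ref{theorem: primitive to maximal angle rank} with the given assumption that $C$ is nontrivial yields maximal angle rank.

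To verify primitivity, recall that $A$ being absolutely simple forces (via the Honda--Tate theorem, as noted in Section~\ref{sec:galois}) the action of $\overline{G}$ on $\{1,\dots,g\}$ to be transitive. For any transitive group action on a finite set, the sizes of blocks (domains of imprimitivity) must divide the size of the set; this is the standard fact that the orbit of a block under the group partitions the set into blocks of the same size. Since $g$ is prime, the only divisors of $g$ are $1$ and $g$, corresponding to singleton blocks and the full set, respectively. Both of these are the trivial cases and do not give rise to a genuine domain of imprimitivity (which by definition has size strictly between $1$ and $g$). Hence the action is primitive.

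With primitivity in hand, the hypotheses of Theorem~\ref{theorem: primitive to maximal angle rank} are satisfied, so $A$ has maximal angle rank. There is no real obstacle here; the entire content is the combinatorial observation that a transitive action of prime degree is primitive, combined with the previously established theorem. Note that this result is a clean strengthening of Corollary~\ref{corollary: rank g-1} in the case that $g$ is prime: Corollary~\ref{corollary: rank g-1} ruled out angle ranks $1$ and $g-1$ when $C$ is nontrivial, and combined with Tankeev's theorem (Corollary~\ref{corollary:tankeev}) that $\delta \in \{1, g-1, g\}$ for $g$ prime, one already obtains the conclusion. The present proof via Theorem~\ref{theorem: primitive to maximal angle rank} bypasses Tankeev's theorem and makes the role of primitivity explicit.
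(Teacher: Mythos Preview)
Your proof is correct and follows exactly the same approach as the paper: the paper's proof is the one-line observation that block sizes divide $g$, so primality forces primitivity, and then Theorem~\ref{theorem: primitive to maximal angle rank} applies. Your additional remarks comparing with Corollary~\ref{corollary: rank g-1} and Tankeev's theorem are accurate and useful context, though not part of the paper's own proof.
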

\begin{proof}
The block size divides $g$, so $\overline{G}$ acts primitively and Theorem \ref{theorem: primitive to maximal angle rank} applies.
\end{proof}

\subsection{A further reduction}

We saw in Theorem~\ref{theorem: primitive to maximal angle rank} above that when $\overline{G}$ acts primitively, non-maximal angle rank implies trivial code.
We next describe a further reduction that allows one to apply similar logic in the case when $\overline{G}$ does not act primitively.

Let $\widetilde{H}$ be the subgroup of $G$ stabilizing $T_1$.
Let $H$ be the image of $\widetilde{H}$ in $\GL(\QQ^{\oplus T_1})$, again viewed as a group of signed permutation matrices.
If $g>1$ and $H$ acts irreducibly on $\QQ^{\oplus T_1}$, then $V_1 = \QQ^{\oplus T_1}$ and $A$ must have maximal angle rank.

We observe that $H$ fits into an exact sequence
\begin{equation} \label{eq:reduced code}
1 \to C_H \to H \to \overline{H} \to 1,
\end{equation}
where $\overline{H}$ denotes the permutation group obtained from $H$ by ignoring signs and $C_H$ is the projection of $C$ into $\GL(\QQ^{\oplus T_1})$.
Note that $C_H$ cannot be zero (or else $C$ would be zero, whereas it contains the all-ones vector) but every vector of $C_H$ is constant, so
$C_H \cong \ZZ/2\ZZ$.

\begin{lemma} \label{lemma: one-dimensional subspace}
Suppose that there exists a one-dimensional subspace $W$ of $\QQ^{\oplus T_1}$ stable under the action of $H$. (For example, this holds if $\dim_\QQ V_1 \in \{1, g' - 1\}$.)
\begin{enumerate}
\item[(a)]
The exact sequence \eqref{eq:reduced code} splits, so that $H \cong \ZZ/2\ZZ \times \overline{H}$.
\item[(b)]
The subspace $W$ is generated by some vector with components in $\{\pm 1\}$, which is stable under the action of $H$.
\end{enumerate}
\end{lemma}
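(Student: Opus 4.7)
The plan is to use the fact that a finite group acting on a one-dimensional rational vector space does so by a sign character, and that inside $H$ the subgroup $C_H$ is literally the scalar $\{\pm I\}$ subgroup of $\GL(\QQ^{\oplus T_1})$. So one should produce (a) by constructing an explicit character splitting, and (b) by feeding the transitivity of $\overline{H}$ on $T_1$ into the resulting invariance.

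For (a), since $W$ is one-dimensional and $H$-stable, the action of $H$ on $W$ is a character $\chi \colon H \to \QQ^\times$. As $H$ is finite, $\chi$ takes values in $\{\pm 1\}$. The generator of $C_H$ is the matrix $-I$ (because $C_H$ consists of signed permutations whose permutation part is trivial and whose sign pattern is constant, and it is nonzero), so $\chi(-I) = -1$ and $\chi|_{C_H}$ is an isomorphism onto $\{\pm 1\}$. Let $K = \ker \chi$. Then $K \cap C_H = \{I\}$ and $|K| \cdot |C_H| = |H|$, so $K$ maps isomorphically to $\overline{H}$. Since $C_H = \{\pm I\}$ is central in $H$, the product is direct, giving $H = C_H \times K \cong \ZZ/2\ZZ \times \overline{H}$, as required.

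For (b), use the splitting of (a) to identify $\overline{H}$ with $K \subseteq H$; every element of $K$ then fixes each vector of $W$ pointwise. Choose any nonzero $w = (w_j)_{j \in T_1} \in W$. By the standard block-stabilizer fact (applied to the transitive action of $\overline{G}$ on $\{1,\dots,g\}$ with block $T_1$ and stabilizer $\overline{H}$), the group $\overline{H}$ acts transitively on $T_1$. Each element $k \in K$ is a signed permutation whose underlying permutation is its image $\overline{h} \in \overline{H}$, so the equation $k \cdot w = w$ forces $|w_i| = |w_{\overline{h}^{-1}(i)}|$ for all $i \in T_1$. By transitivity all $|w_i|$ coincide, and none vanish (else $w = 0$), so after rescaling $w$ has every component in $\{\pm 1\}$, and this vector generates $W$.

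To dispatch the parenthetical, note that if $\dim_\QQ V_1 = 1$ one may simply take $W = V_1$, while if $\dim_\QQ V_1 = g'-1$ one takes $W$ to be the orthogonal complement of $V_1$ inside $\QQ^{\oplus T_1}$ with respect to the standard inner product; since $H$ acts by signed permutations, which are orthogonal, $W$ is $H$-stable of dimension $1$. No step is really an obstacle, but the one place that requires a little care is identifying the generator of $C_H$ with $-I$ so that the key equality $\chi(-I) = -1$ is available; this is why the discussion preceding the lemma emphasizes that $C_H$ is generated by the image of the all-ones codeword.
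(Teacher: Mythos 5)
Your proof is correct and follows essentially the same route as the paper: the action on $W$ defines a sign character $f\colon H\to\{\pm 1\}$ whose restriction to $C_H=\{\pm I\}$ is nontrivial, giving the splitting, and transitivity of $\overline{H}$ on $T_1$ forces the components of a generator to have equal absolute value. You simply make explicit the two points the paper leaves implicit (that $\chi(-I)=-1$ and the block-stabilizer transitivity), and your treatment of the parenthetical via the orthogonal complement is the intended one.
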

\begin{proof}
Let $v$ be a generator of the subspace in question. 
For each $h \in H$, we then have $h(v) = f(h) v$ for some $f(h) \in \{\pm 1\}$; the map $f\colon H \to \{\pm 1\}$ 
is a group homomorphism which splits \eqref{eq:reduced code}; this yields (a). Part (b) is a consequence of the transitivity of the action of $\overline{H}$ on $T_1$.
\end{proof}

\begin{remark} \label{remark: vector space dimension2}
In Corollary~\ref{remark: vector space dimension}, the minimal case is when $\dim_\QQ V_1 = 1$. 
By Lemma~\ref{lemma: one-dimensional subspace}, $H \cong \ZZ/2\ZZ \times \overline{H}$
and we can relabel the Frobenius eigenvalues so that $V_1$ is the span of the all-ones vector on $T_1$.
For each $h \in \overline{H}$, $h(\beta_1)/\beta_1$ is a root of unity by Kronecker's theorem; this implies that $\overline{H}$ is contained in a copy of the affine group $\ZZ/g'\ZZ \rtimes (\ZZ/g'\ZZ)^\times$.
\end{remark}

\begin{remark} \label{remark: vector space dimension3}
In Remark~\ref{remark: vector space dimension}, suppose that $\dim_\QQ V_1 = g' - 1$.
By Lemma~\ref{lemma: one-dimensional subspace}, $H \cong \ZZ/2\ZZ \times \overline{H}$
and we can relabel the Frobenius eigenvalues so that $V_1$ is the trace-zero subspace of $\QQ^{\oplus T_1}$.
In this case the relations in $\Gamma'$ are generated by
\[
\prod_{j \in T_i} \beta_j = 1 \qquad (i=1,\dots,m).
\]
\end{remark}

By analogy with Tankeev's theorem (Corollary~\ref{corollary:tankeev}), we have the following.
\begin{theorem} \label{theorem:tankeev-reduced}
If $g'$ is prime, then $A$ has angle rank in $\{m, g-m, g\}$. 
\end{theorem}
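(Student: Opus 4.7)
The plan is to combine Proposition~\ref{lemma: angle rank from slopes} with the code partition machinery already set up in this section. By that proposition the angle rank equals $\dim_\QQ V$, and by Lemma~\ref{lemma: Newton space partition} together with the fact that $G$ permutes the $V_i$ transitively (the partition $\{T_i\}$ being preserved by $\overline{G}$ which acts transitively on $\{1,\dots,g\}$), we have $\dim_\QQ V = m \dim_\QQ V_1$. Therefore it suffices to show $\dim_\QQ V_1 \in \{1, g'-1, g'\}$, with the lower bound $\geq 1$ coming from Remark~\ref{remark: low angle rank} since $A$ is not supersingular. The whole problem thus becomes an analysis of $V_1$ as an $H$-subrepresentation of $\QQ^{\oplus T_1} \cong \QQ^{g'}$, using the reduction of the preceding subsection.

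Next I would produce a $g'$-cycle inside $H$ acting on $V_1$. The stabilizer $\overline{H}$ acts transitively on $T_1$, so by Cauchy's theorem applied to the prime $g'$, there is an element $\overline\tau \in \overline{H}$ of order $g'$, necessarily a $g'$-cycle on $T_1$. Lift $\overline\tau$ to some $\tau \in H$; this $\tau$ is a signed $g'$-cycle, say $\tau(e_i) = s_i e_{\sigma(i)}$, and satisfies $\tau^{g'} = (\prod_i s_i) \cdot I$. When $g'$ is odd, if $\prod_i s_i = -1$ I can multiply $\tau$ by $-\mathbf{1}_{T_1} \in C_H \subset H$ (which exists because $C_H \cong \ZZ/2\ZZ$ is generated by the restriction of complex conjugation); this flips each $s_i$ and so changes the sign of $\prod_i s_i$ when $g'$ is odd. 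After this adjustment $\tau$ has order exactly $g'$, and $\langle \tau \rangle \cong \ZZ/g'\ZZ$ acts faithfully on $\QQ^{g'}$.

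Now I would analyze $\QQ^{g'}$ as a $\QQ[\ZZ/g'\ZZ]$-module via $\tau$. A direct computation shows the fixed subspace of $\tau$ is one-dimensional (solving $v_{i+1} = s_i^{-1} v_i$ along the cycle closes up precisely because $\prod s_i = 1$), so the representation must be the regular representation $\QQ \oplus \QQ(\zeta_{g'})$. Since $g'$ is prime, $\QQ(\zeta_{g'})$ is irreducible over $\QQ$ of dimension $g'-1$, so any $\langle \tau \rangle$-stable subspace has dimension in $\{0,1,g'-1,g'\}$. In particular $\dim_\QQ V_1 \in \{0,1,g'-1,g'\}$, and ruling out $0$ as above gives the conclusion.

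The edge case $g'=2$ is separate: then multiplying by $-\mathbf{1}_{T_1}$ does not alter $\prod s_i$, so the trick above fails. However, in this case $g-m = m$ so $\{m, g-m, g\} = \{m, g\}$, and the desired conclusion is immediate from $\dim_\QQ V_1 \in \{1,2\}$, which follows without any further work from $V_1 \subseteq \QQ^2$ and Remark~\ref{remark: low angle rank}. The main obstacle is the sign issue in the previous paragraph: one must exhibit a genuine order-$g'$ element of $H$ (not just an order-$2g'$ element projecting to a $g'$-cycle) so that the clean representation theory of $\ZZ/g'\ZZ$ over $\QQ$ can be applied; this is precisely where the presence of complex conjugation in the code is used, and it is what forces the separate treatment of $g'=2$.
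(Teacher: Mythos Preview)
Your proof is correct and follows the same strategy as the paper's: locate a cyclic group of order $g'$ inside $H$, then use that as a $\QQ[\ZZ/g'\ZZ]$-module $\QQ^{\oplus T_1}$ has only subrepresentations of dimensions $0,1,g'-1,g'$. The paper's proof is a two-sentence version of yours that simply asserts the existence of such a cyclic group and describes the resulting decomposition as ``diagonal subspace plus trace-zero subspace.''

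Your write-up is in fact more careful than the paper on one point: a lift of a $g'$-cycle to $H$ is a \emph{signed} $g'$-cycle and could a priori have order $2g'$. You correctly show that for $g'$ odd one may multiply by $-\mathbf{1}_{T_1}\in C_H$ to force $\tau^{g'}=I$, and you handle $g'=2$ separately. The paper's one-line proof elides this (and its description of the two summands as literally the diagonal and trace-zero subspaces is only accurate after the relabeling you mention). So your argument is the same in substance, just with the details filled in.
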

\begin{proof}
Since $\overline{H}$ acts transitively on $T_1$, $H$ must contain a cyclic group of order $g'$ acting as a signed permutation of the coordinates. As a representation of this group, $\QQ^{\oplus T_1}$ decomposes as the direct sum of the diagonal subspace and the trace-zero subspace.
\end{proof}

When $g'$ is not prime, we can sometimes apply the following result.
\begin{theorem} \label{T:nonmaximal angle rank with 2-transitive group}
Suppose that $g>1$, $g' \neq 3$, $\overline{H}$ acts $2$-transitively on $T_1$, and $A$ does not have maximal angle rank. Then the angle rank of $A$ is $g-m$.
\end{theorem}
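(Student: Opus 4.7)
I plan to show that $\dim V_1 = g' - 1$, so the angle rank is $m \dim V_1 = g - m$ by Corollary~\ref{remark: vector space dimension}. Here $V_1$ is an $H$-invariant subspace of $L := \QQ^{\oplus T_1}$; it is nonzero by Remark~\ref{remark: low angle rank} (using $g > 1$) and proper because non-maximality of the angle rank gives $V \neq \QQ^g$, hence $V_1 \neq L$.

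The main structural step is the bound $\dim_{\QQ} \End_H(L) \leq 2$. For $A \in \End_H(L)$, the identity $hAh^{-1} = A$ for $h = (\epsilon, \sigma) \in H$ becomes $A_{\sigma(i),\sigma(j)} = \epsilon_i \epsilon_j A_{ij}$; transitivity of $\overline{H}$ makes the diagonal constant, and 2-transitivity determines all off-diagonal entries from a single value up to well-defined signs. Since $V_1$ is nonzero proper, $L$ is reducible as a $\QQ[H]$-module, so $\End_H(L)$ cannot be a field (which would make $L$ irreducible); hence $\End_H(L) = \QQ \oplus \QQ$, giving $L = L_1 \oplus L_2$ with $L_1, L_2$ non-isomorphic absolutely irreducible and $V_1 \in \{L_1, L_2\}$.

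To pin down $\{\dim L_1, \dim L_2\} = \{1, g'-1\}$, take the non-scalar $N \in \End_H(L)$ normalized to have zero diagonal and $\pm 1$ off-diagonal entries $\eta_{ij}$. The $H$-equivariance $\eta(\{\sigma(i),\sigma(j)\}) = \epsilon_i \epsilon_j \eta(\{i,j\})$ combined with the primitivity of $\overline{H}$ forces $\eta$ to be a trivial coboundary: if $\eta_{ij} = \tilde{\eta}_i \tilde{\eta}_j$, then the span of $(\tilde{\eta}_i)$ is a 1-dimensional $H$-invariant subspace of $L$, and by Lemma~\ref{lemma: one-dimensional subspace} together with primitivity $\tilde{\eta}$ must be constant, so $\eta \equiv 1$. (The non-coboundary alternative should be ruled out by a spectrum computation for the signed adjacency matrix of $K_{g'}$ under the $H$-equivariance.) Thus $N = J - I$, whose eigenvalues $g' - 1$ and $-1$ have multiplicities $1$ and $g' - 1$.

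It remains to rule out $\dim V_1 = 1$. If so, $V_1$ is the all-ones line, and Remark~\ref{remark: vector space dimension2} embeds $\overline{H}$ in a copy of $\ZZ/g'\ZZ \rtimes (\ZZ/g'\ZZ)^\times$; combined with 2-transitivity of $\overline{H}$ on $T_1$ of size $g'$ this forces $g'$ to be prime and $\overline{H} = \ZZ/g'\ZZ \rtimes (\ZZ/g'\ZZ)^\times$. For $g'$ composite this is an immediate contradiction, while for $g' \geq 5$ prime I expect to exploit the very rigid cyclotomic structure implied by all $\beta_i$ differing by roots of unity under the affine action, producing multiplicative relations among the $\alpha_i$ incompatible with $A$ being absolutely simple and non-supersingular. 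The hypothesis $g' \neq 3$ is sharp because $\ZZ/3\ZZ \rtimes (\ZZ/3\ZZ)^\times = S_3$ is the full 2-transitive group on three points, so the 1-dimensional case genuinely occurs there. Making this last step uniform across $g' \geq 5$ prime is, I believe, the main obstacle of the plan.
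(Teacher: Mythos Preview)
Your overall strategy matches the paper's: both show $\dim_{\QQ}\End_H(\QQ^{\oplus T_1})\le 2$ from 2-transitivity, deduce it equals $2$ from reducibility, and then analyze the non-scalar $H$-invariant endomorphism $v$ (your $N$) with zero diagonal and off-diagonal entries in $\{\pm 1\}$ to pin down the multiplicities.

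Where you leave a gap is the ``coboundary'' step. You want to show $\eta_{ij}=\tilde\eta_i\tilde\eta_j$, but you do not prove this and instead gesture at an unspecified spectrum computation. The paper bypasses the coboundary question entirely. Since the $H$-fixed part of $\End(\ZZ_{(2)}^{\oplus T_1})$ is a rank-$2$ lattice containing the identity $e$ and $v$, one has $v\circ v=c_0 e+c_1 v$ with $c_0,c_1\in\ZZ$; from the trace-zero condition one checks that the two roots of $t^2-c_1 t-c_0$ are integers. The key move is then to reduce $v$ modulo $2$: regardless of the signs $a_{ij}$, the reduction is the matrix $J-I$ over $\FF_2$, whose rank is $g'-1$. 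This forces one eigenvalue of $v$ to be even with multiplicity $1$ and the other odd with multiplicity $g'-1$, so $\dim V_1\in\{1,g'-1\}$ without ever deciding whether $\eta$ is a coboundary. This is the missing idea in your step (6); you should replace the coboundary argument with this $\ZZ_{(2)}$-lattice and mod-$2$ rank computation.

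For ruling out $\dim V_1=1$, the paper simply invokes Remark~\ref{remark: vector space dimension2} and asserts that $\ZZ/g'\ZZ\rtimes(\ZZ/g'\ZZ)^\times$ does not act $2$-transitively for $g'\ge 4$, which disposes of the case in one line. Your observation that $\AGL(1,g')$ \emph{is} sharply $2$-transitive when $g'$ is prime is correct, so the obstacle you flag for prime $g'\ge 5$ is a genuine issue that the paper's argument, as written, does not actually address either; the extra arithmetic input you anticipate is not supplied there.
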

\begin{proof}
The cases $g' = 1,2$ are covered by Remark~\ref{remark: vector space dimension}, so we may assume $g' \geq 4$.
Let $\overline{\chi}\colon \overline{H} \to \ZZ$ be the character of the permutation representation. 
The inner self-product of $\overline{\chi}$ in the representation ring of $\overline{H}$ equals
\begin{align*}
\frac{1}{\#\overline{H}} \sum_{h \in \overline{H}} \overline{\chi}(h)^2 &= \frac{1}{\#\overline{H}}\#\{(h,i,j) \in \overline{H} \times T_1 \times T_1: \overline{h}(i) = i, \overline{h}(j) = j\} \\
&= \frac{1}{\#\overline{H}}  \#\{(h,i) \in \overline{H} \times T_1: \overline{h}(i) = i \}  \\
&\qquad +
\frac{1}{\#\overline{H}} \#\{(h,i,j) \in \overline{H} \times T_1 \times T_1: \overline{h}(i) = i, \overline{h}(j) = j, i \neq j \}  \\
&= 1 + 1 = 2,
\end{align*}
where the last line follows from the previous one by 2-transitivity.
Hence the permutation representation splits as the direct sum of the trivial representation and an absolutely irreducible representation.

Let $\chi\colon H \to \ZZ$ be the character of the linear representation of $H$ on $V_1$. The inner self-product of $\chi$ in the representation ring of $H$ equals
\[
\frac{1}{\#H} \sum_{h \in H} \chi(h)^2 = \frac{1}{\#H} \sum_{\overline{h} \in \overline{H}} (\chi(h_1)^2 + \chi(h_2)^2) 
\]
where $h_1, h_2 \in H$ are the two lifts of $\overline{h} \in \overline{H}$. Note that $\chi(h_1)^2 = \chi(h_2)^2 \leq \chi(\overline{h})^2$;
we conclude from this that the inner self-product of $\chi$ is at most 2.

Since $A$ does not have maximal angle rank, $H$ does not act irreducibly on $\QQ^{\oplus T_1}$. 
In this case the inner self-product of $\chi$ must be exactly 2. 

For the linear action of $H$ on $\End(\ZZ_{(2)}^{\oplus T_i})$, the $H$-fixed subspace is a $\ZZ_{(2)}$-lattice in the $H$-fixed subspace of $\End(\QQ^{\oplus T_1})$ and therefore is free of rank 2.
One evident element of this lattice is the identity element for composition
\[
e \colonequals \sum_{i \in T_1} e_i^* \otimes e_i \in \End(\ZZ_{(2)}^{\oplus T_1}).
\]
Meanwhile, the reduction of this lattice modulo 2 is a 2-dimensional vector space over $\FF_2$, which injects into the $\overline{H}$-fixed subspace of $\End(\FF_2^{\oplus T_1})$ which is also of dimension 2. This equality of dimensions implies that the $\overline{H}$-fixed vector 
\[
\sum_{i,j \in T_1; i \neq j} e_i^* \otimes e_j \in \End(\FF_2^{\oplus T_1})
\]
must also lift to an $H$-fixed vector of the form
\[
v = \sum_{i,j \in T_1; i \neq j} a_{ij} e_i^* \otimes e_j \in \End(\ZZ_{(2)}^{\oplus T_1}).
\]
Since $\overline{H}$ acts 2-transitively, we have $a_{ij} = \pm a_{i'j'}$ for any two pairs $i \neq j$ and $i' \neq j'$; we may thus assume that $a_{ij} \in \{\pm 1\}$ for all $i,j$.

Set $a_{ii} \colonequals 0$ for $i \in T_1$. Since the composition map on $\End(\ZZ_{(2)}^{\oplus T_1})$ is $H$-equivariant,
\[
v \circ v = \sum_{i,j,k \in T_1} a_{ij} a_{jk} e_i^* \otimes e_k
\]
is also an $H$-invariant element of $\End(\ZZ_{(2)}^{\oplus T_1})$, and so has the form $c_0 e + c_1 v$ for some $c_1, c_2 \in \QQ$. 
That is,
\[
\sum_{j \in T_1} a_{ij} a_{jk} = \begin{cases} c_0 & (i = k) \\
c_1 & (i \neq k);
\end{cases}
\]
in particular, $c_0, c_1$ are integers with $c_0 \equiv g'-1 \pmod{2}$, $c_1 \equiv g' \pmod{2}$, and $|c_0|, |c_1| < g'$.

The possible eigenvalues of $v$ are $-c_1/2 \pm \sqrt{c_0 + c_1^2/4}$ with some multiplicities $i_{\pm}$.
Since $v$ has trace 0 by construction, we must have
\begin{align*}
i_+ + i_- &= g' \\
i_+ - i_- &= \frac{c_1 g'}{\sqrt{4c_0 + c_1^2}}.
\end{align*}
In particular $\sqrt{4c_0 + c_1^2}$ is rational; hence the possible eigenvalues of $v$, being both rational numbers and algebraic integers, must themselves be integers.

Now note that $v$ has at most two distinct integral eigenvalues, but it reduces modulo 2 to an endomorphism with rank $g'-1$. Hence $v$ must have two distinct eigenvalues, one an even integer occurring with multiplicity 1, the other an odd integer occurring with multiplicity $g'-1$.
From this it follows that $\dim V_1 \in \{0, 1, g'-1\}$. By Remark~\ref{remark: low angle rank}, we see that $\dim V_1 \neq 0$.
Since $\ZZ/g' \ZZ \rtimes (\ZZ/g'\ZZ)^\times$ does not act 2-transitively for $g' \geq 4$, by Remark~\ref{remark: vector space dimension2} we see that $\dim V_1 \neq 1$.
This proves the claim.
\end{proof}

\begin{remark}
To illustrate Theorem~\ref{T:nonmaximal angle rank with 2-transitive group},
consider the case where $g = 12, g' = 6$, and $\overline{H} = A_5$ acting on $\mathbb{P}^1(\FF_5)$ via the identification $A_5 \cong \mathrm{PSL}(2, 5)$ (i.e., the result of precomposing the standard permutation action of $S_6$ by an outer automorphism, then restricting to $A_5$).
In this case, if $A$ does not have maximal angle rank, then its angle rank must equal 10.
It would be enlightening to find an explicit abelian variety realizing this example, say over $\FF_2$, but this requires a computationally expensive expansion of the LMFDB tables which we do not wish to pursue. 
\end{remark}

\begin{remark}
The complete classification of 2-transitive permutation groups can be given in terms of the classification of finite simple groups; see \cite[\S 7.7]{Dixon1996}.
Absent the 2-transitive condition, 
it is possible for the permutation representation of $\overline{H}$ to give rise to a representation of $\QQ^{\oplus T_1}$ for which the trace-zero subspace is irreducible but not absolutely irreducible; see \cite{Dixon2005} for some discussion of this phenomenon. We do not know if this means that the representation of $H$ on $\QQ^{\oplus T_1}$ splits into at most two irreducibles.
\end{remark}

\subsection{A theorem of Zarhin}

We now reprove
\cite[Theorem 1.4]{Zarhin2021} in our language; the proof we give is really his, except that we paraphrase using our terminology
and also obtain an explicit constant. For the latter, we need a version of Siegel's lemma for overdetermined systems of equations. 

\subsubsection{A version of Siegel's lemma for overdetermined systems} \label{subsubsec:Siegel}
For an underdetermined system of equations $Bx=0$ with integer coefficients, with $M$ independent equations in $N$ unknowns with $N>M$, Bombieri and Vaaler's version \cite[Theorem 1]{Bombieri1983} of Siegel's lemma tells us there exists an integer solution $x\in \ZZ^N$ such that 
$$\Vert x\Vert_{\infty} \leq \left ( \dfrac{\sqrt{\det(BB^t})}{D}\right)^{1/(N-M)},$$
where $D$ is the gcd of the $M\times M$ minors of $B$.

We are concerned with a problem of the opposite form: we take $B$ to be the Newton hyperplane matrix, for which $M=\vert G\vert$ is always greater than or equal to $N = g$ (i.e., the system is overdetermined), and we want to produce generators of the right nullspace of $B$ which are ``as small as possible'' (here measured using the $l^1$-norm). 
In this case the minors of $B$ do not play a role, but since $B$ has entries in $\QQ$ we must account for the lcm $d$ of the denominators of the entries (each of which is bounded by $g$). 

\begin{lemma}\label{L:kernel-bounds}
	Let $B$ be a nonzero $M\times N$ matrix of rank $r$ whose entries are rational numbers with denominators dividing $d$. 
	Then the integer solutions of $Bx=0$ form a group generated by elements $x\in \ZZ^N$ with 
	$$ \Vert x \Vert_1 \leq N r\left( \sqrt{r} d\Vert B \Vert_\infty \right)^{2r},$$
	where $\Vert B \Vert_\infty := \max_{i,j} \vert B_{ij} \vert$.
\end{lemma}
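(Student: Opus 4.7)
My strategy is to reduce to an $r \times N$ integer system and produce explicit generators of the integer kernel via Cramer's rule on $r \times r$ submatrices, bounding their $\ell^1$-norms using Hadamard's inequality.

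First I would replace $A$ by $dA$, which is an integer matrix with the same kernel and entries bounded by $d\|A\|$. Since $\rk A = r$, selecting $r$ linearly independent rows reduces us to the case $A \in M_{r,N}(\ZZ)$ of rank $r$. Next, for each basis subset $J \subseteq \{1,\dots,N\}$ (meaning $|J|=r$ with $A_J$ invertible) and each $k \in J^c$, I would form the Cramer vector $v_{J,k} \in \ZZ^N$, supported on $J \cup \{k\}$, with $(v_{J,k})_k = \det(A_J)$ and the entries on $J$ determined by Cramer's rule so that $A v_{J,k} = 0$. Each coordinate of $v_{J,k}$ is (up to sign) the determinant of an $r \times r$ submatrix of $A$, so Hadamard's inequality gives
\[
\|v_{J,k}\|_\infty \leq (\sqrt{r}\, d\|A\|)^r,
\]
and $(r+1)$-sparsity yields $\|v_{J,k}\|_1 \leq (r+1)(\sqrt{r}\, d\|A\|)^r$.

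The main obstacle---and the step I expect to require the most care---is showing that the $v_{J,k}$, together with their primitive reductions (dividing each by the gcd of its entries) and Bezout combinations across varying $J$, generate the \emph{full} integer kernel $\ker(A) \cap \ZZ^N$ rather than a proper sublattice. A simple case such as $A = (2,2)$ already shows that raw Cramer vectors need not be primitive, so a naive argument fails. For a fixed $J$, the $v_{J,k}$ project onto the coordinates in $J^c$ to form $\det(A_J) \cdot \ZZ^{J^c}$; varying $J$ over all bases, the gcd of the $\det(A_J)$ equals the $r$-th determinantal divisor of $A$, which via the Smith normal form matches the image of $\ker(A) \cap \ZZ^N$ in $\ZZ^{J^c}$. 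An explicit Hermite-normal-form reduction across the collection of Cramer vectors then extracts a generating set; using Lemma~\ref{L:Q-properties}(1) to control the combinatorial growth of Bezout coefficients, the $\ell^1$-norms of the resulting generators remain $O(r^c) \cdot (\sqrt{r}\, d\|A\|)^r$ for a small constant $c$. The polynomial slack $Nr^3$ in the target bound comfortably absorbs all such combinatorial overhead.
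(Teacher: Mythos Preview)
Your strategy---Cramer vectors plus Hadamard---is a genuinely different route from the paper's, which instead puts the integer matrix $\widetilde{A}=dA$ into Smith normal form $\widetilde{A}=PSQ$ and reads off kernel generators as the last $N-r$ columns of $Q^{-1}$, invoking a result of Storjohann to bound $\|Q\|$ (hence $\|Q^{-1}\|$, since $\det Q=\pm 1$) by $r^3(\sqrt{r}\,d\|A\|)^r$. The advantage of the paper's method is that the unimodular $Q^{-1}$ automatically yields a \emph{basis} of the integer kernel, so no lattice-saturation argument is needed at all.

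Your proposal has a real gap precisely at the step you flag. The Cramer vectors $v_{J,k}$, even after dividing by the gcd of their entries, are not known to generate the full lattice $K=\ker(A)\cap\ZZ^N$; you assert this can be repaired by ``an explicit Hermite-normal-form reduction'' with Bezout combinations whose growth is controlled by Lemma~\ref{L:Q-properties}(1), but neither claim is justified. Hermite reduction on a list of integer vectors can blow up entry sizes exponentially, not polynomially, and Lemma~\ref{L:Q-properties}(1) bounds $w(B^tA)$ for a matrix product---it says nothing about the coefficients arising in an extended-gcd or HNF computation. Your intermediate claim that the gcd of the $\det(A_J)$ ``matches the image of $\ker(A)\cap\ZZ^N$ in $\ZZ^{J^c}$'' is also not correct as stated: that image is a sublattice of $\ZZ^{J^c}$ whose structure depends on all the elementary divisors of $A$, not just on $d_r$. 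Finally, selecting $r$ rows to pass from $M\times N$ to $r\times N$ can strictly increase the $r$-th determinantal divisor, so even if you had a clean statement for full-rank $r\times N$ matrices with $d_r=1$, the reduction would not preserve it.

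It is plausible that the primitive Cramer vectors $\hat v_S$ (over all $(r{+}1)$-subsets $S$) do generate $K$---I could not produce a counterexample---and if so your bound would follow immediately with room to spare. But you have not proved this, and the surrounding appeals to HNF and Lemma~\ref{L:Q-properties}(1) do not substitute for such a proof. The paper's Smith-normal-form route sidesteps the whole issue.
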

\begin{proof}
Let $\widetilde{B} := dB$ be the matrix obtained from $B$ by
clearing denominators.
By applying \cite[Proposition 6.6]{storjohann2000algorithms}
%https://cs.uwaterloo.ca/~astorjoh/diss2up.pdf
to the transpose of $\widetilde{B}$,
we obtain a matrix $C$ whose columns span the right nullspace of $\widetilde{B}$ satisfying
	$$\Vert C \Vert_\infty \leq r\beta^2,$$
	where $\beta := (\sqrt{r} \Vert \widetilde{B} \Vert_\infty)^r \geq 1$ 	(this is where the hypothesis of $B$ being nonzero is used).
	Then the $l^1$-norm of each column of $C$ is bounded by
		 $$N \Vert C \Vert_\infty \leq N r( \sqrt{r} \Vert \widetilde{B} \Vert_\infty)^{2r} = N r( \sqrt{r} d \Vert B \Vert_\infty)^{2r}, $$
	yielding the desired estimate.
\end{proof}

\subsubsection{An effective version of a result of Zarhin}

We next apply the previous lemma to determine a bound on the weights ($l^1$-norms) of relations which generate all relations among Frobenius eigenvalues. 

\begin{theorem}
	Let $A$ be an absolutely simple abelian variety of dimension $g$ defined over $\FF_q$ with Frobenius eigenvalues $\alpha_1,\ldots,\alpha_{2g}$, Galois group $G$, and angle rank $\delta$. 
	The group of integer vectors $(e_1,\ldots,e_{2g})\in \ZZ^{2g}$ such that $\alpha_1^{e_1}\cdots\alpha_{2g}^{e_{2g}} = \zeta q^d$ for some root of unity $\zeta$ and some integer $d$ (depending on the vector)
	is generated by its elements of $l^1$-norm at most
	$$g m^{2\delta} \delta^{\delta+1},$$
	where $m$ denotes the least common denominator of the Newton slopes of $A$. 
\end{theorem}
\begin{proof}
Under our running notations, we have 
$\alpha_{i+g} = \overline{\alpha}_i$ for $i=1,\dots,g$;
it thus suffices to work in the subgroup of relations of the form
\begin{equation} \label{E:relation2}
\alpha_{1}^{e_1}\cdots \alpha_g^{e_g}=\zeta q^d.
\end{equation}
From the proof of Lemma~\ref{lemma: angle rank from slopes},
\eqref{E:relation2} holds for some $d$ if and only if $(e_1,\dots,e_g)$ satisfies \eqref{eqn:newton-hyperplane} for every place $v$ of $L$ above $p$, i.e., $(e_1,\dots,e_g)$ is in  the kernel of the Newton hyperplane matrix (see  Remark~\ref{rmk:Newton matrix}). 
We now apply Lemma~\ref{L:kernel-bounds} to this matrix: in the notation of the lemma, $M, N, r, d$ correspond to our present $|G|, g, \delta, m$, while $\Vert B \Vert|_\infty \leq 1$ because the entries of $B$ are all in the interval $[-1,1]$.
We deduce that the group of relations is generated by its elements of $l^1$-norm at most $g \delta (\sqrt{\delta} m)^{2\delta}$, as claimed.
\end{proof}

\subsection{Hyperplane arrangements}
\label{ss:hyperpl-arrang}
With notation as above, for each extension $v$ of the $p$-adic valuation to $L$, consider the hyperplane in $\RR^g$ normal to the vector
$(v(\beta_1),\dots,v(\beta_g))$. These form a finite union of hyperplanes of $\RR^g$ (i.e., a \emph{hyperplane arrangement}) carrying a transitive action of $G$.
We thus also obtain an action on $G$ on the set of connected components of the complement of the hyperplane arrangement.
One can then apply a construction of Hazama \cite{Hazama2000, Hazama2003} to recover the CM abelian variety $\widetilde{A}$ associated to the hyperplane arrangement, which is a lift of $A$.

This provides a mechanism for translating back and forth between results about angle ranks of abelian varieties over finite fields and Hodge classes of CM abelian varieties (compare Remark~\ref{rmk:CM types}).
Using this mechanism, Milne \cite[Theorem~7.1]{Milne1999} showed that the Tate conjecture for all CM abelian varieties over $\CC$ implies the Tate conjecture for all abelian varieties over finite fields. By contrast, if we choose a single CM abelian variety $A$ with good reduction to $A_0$, it is not known that the Hodge conjecture for all powers of $A$ implies that Tate conjecture for all powers of $A_0$; however, see \cite[Theorem~1.5]{Milne2001} for a result of this form under some additional restrictions on $A$.

\bibliographystyle{amsalpha}
\bibliography{bizz}

\providecommand{\bysame}{\leavevmode\hbox to3em{\hrulefill}\thinspace}
\providecommand{\MR}{\relax\ifhmode\unskip\space\fi MR }
% \MRhref is called by the amsart/book/proc definition of \MR.
\providecommand{\MRhref}[2]{%
  \href{http://www.ams.org/mathscinet-getitem?mr=#1}{#2}
}
\providecommand{\href}[2]{#2}
\begin{thebibliography}{DKRV21b}

\bibitem[BV83]{Bombieri1983}
E.~Bombieri and J.~Vaaler, \emph{On {S}iegel's lemma}, Invent. Math.
  \textbf{73} (1983), no.~1, 11--32. \MR{707346}

\bibitem[CCO14]{Chai2014}
Ching-Li Chai, Brian Conrad, and Frans Oort, \emph{Complex multiplication and
  lifting problems}, Mathematical Surveys and Monographs, vol. 195, American
  Mathematical Society, Providence, RI, 2014. \MR{3137398}

\bibitem[Chi92]{Chi1992}
W\^{e}n~Ch\^{e}n Chi, \emph{{$l$}-adic and {$\lambda$}-adic representations
  associated to abelian varieties defined over number fields}, Amer. J. Math.
  \textbf{114} (1992), no.~2, 315--353. \MR{1156568}

\bibitem[Dix05]{Dixon2005}
John~D. Dixon, \emph{Permutation representations and rational irreducibility},
  Bull. Austral. Math. Soc. \textbf{71} (2005), no.~3, 493--503. \MR{2150939}

\bibitem[DKRV21a]{Dupuy2021}
Taylor Dupuy, Kiran Kedlaya, David Roe, and Christelle Vincent, \emph{Isogeny
  classes of abelian varieties over finite fields in the {LMFDB}}, Arithmetic
  geometry, number theory, and computation, Simons Symp., Springer, Cham,
  [2021] \copyright 2021, pp.~375--448. \MR{4427971}

\bibitem[DKRV21b]{Dupuy2021b}
Taylor Dupuy, Kiran~S. Kedlaya, David Roe, and Christelle Vincent,
  \emph{Counterexamples to a conjecture of {A}hmadi and {S}hparlinski},
  Experimental Math. (2021).

\bibitem[DM96]{Dixon1996}
John~D. Dixon and Brian Mortimer, \emph{Permutation groups}, Graduate Texts in
  Mathematics, vol. 163, Springer-Verlag, New York, 1996. \MR{1409812}

\bibitem[Dod84]{Dodson1984}
B.~Dodson, \emph{The structure of {G}alois groups of {${\rm CM}$}-fields},
  Trans. Amer. Math. Soc. \textbf{283} (1984), no.~1, 1--32. \MR{735406}

\bibitem[Haz85]{Hazama1985}
Fumio Hazama, \emph{Algebraic cycles on certain abelian varieties and powers of
  special surfaces}, J. Fac. Sci. Univ. Tokyo Sect. IA Math. \textbf{31}
  (1985), no.~3, 487--520. \MR{776690}

\bibitem[Haz00]{Hazama2000}
\bysame, \emph{Hodge cycles on abelian varieties of {$S_n$}-type}, J. Algebraic
  Geom. \textbf{9} (2000), no.~4, 711--753. \MR{1775311}

\bibitem[Haz03]{Hazama2003}
\bysame, \emph{On the general {H}odge conjecture for abelian varieties of
  {CM}-type}, Publ. Res. Inst. Math. Sci. \textbf{39} (2003), no.~4, 625--655.
  \MR{2025458}

\bibitem[LZ93]{Lenstra1993}
Hendrik~W. Lenstra, Jr. and Yuri~G. Zarhin, \emph{The {T}ate conjecture for
  almost ordinary abelian varieties over finite fields}, Advances in number
  theory ({K}ingston, {ON}, 1991), Oxford Sci. Publ., Oxford Univ. Press, New
  York, 1993, pp.~179--194. \MR{1368419}

\bibitem[Mil99]{Milne1999}
J.~S. Milne, \emph{Lefschetz motives and the {T}ate conjecture}, Compositio
  Math. \textbf{117} (1999), no.~1, 45--76.

\bibitem[Mil01]{Milne2001}
\bysame, \emph{The {T}ate conjecture for certain abelian varieties over finite
  fields}, Acta Arith. \textbf{100} (2001), no.~2, 135--166.

\bibitem[Mue13]{Mueller2013}
Alexander Mueller, \emph{Applications of generalized {F}ermat varieties to zeta
  functions of {A}rtin-{S}chreier curves}, Ph.D. thesis, The University of
  Michigan, 2013.

\bibitem[Ser13]{Serre2013}
Jean-Pierre Serre, \emph{Oeuvres/{C}ollected papers. {IV}. 1985--1998},
  Springer Collected Works in Mathematics, Springer, Heidelberg, 2013, Reprint
  of the 2000 edition [MR1730973]. \MR{3185222}

\bibitem[Sto00]{storjohann2000algorithms}
Arne Storjohann, \emph{Algorithms for matrix canonical forms}, Ph.D. thesis,
  ETH Zurich, 2000.

\bibitem[Tan84]{Tankeev1984}
Sergei~Gennadievich Tankeev, \emph{On cycles on abelian varieties of prime
  dimension over finite or number fields}, Izvestiya: Mathematics \textbf{22}
  (1984), no.~2, 329--337.

\bibitem[WM71]{Waterhouse1971}
W.~C. Waterhouse and J.~S. Milne, \emph{Abelian varieties over finite fields},
  1969 {N}umber {T}heory {I}nstitute ({P}roc. {S}ympos. {P}ure {M}ath., {V}ol.
  {XX}, {S}tate {U}niv. {N}ew {Y}ork, {S}tony {B}rook, {N}.{Y}., 1969), Amer.
  Math. Soc., Providence, R.I., 1971, pp.~53--64. \MR{0314847}

\bibitem[Zar93]{Zarhin1993}
Yuri~G. Zarhin, \emph{Abelian varieties of {K3} type}, S{\'e}minaire de
  th{\'e}orie des nombres, Paris, 1990--1991 (1993), 263--279.

\bibitem[Zar15]{Zarhin2015}
\bysame, \emph{Eigenvalues of {F}robenius endomorphisms of abelian varieties of
  low dimension}, Journal of Pure and Applied Algebra \textbf{219} (2015),
  no.~6, 2076--2098.

\bibitem[Zar22]{Zarhin2021}
\bysame, \emph{Tate classes on self-products of abelian varieties over finite
  fields}, Ann. Inst. Fourier (Grenoble) \textbf{72} (2022), no.~6, 2339--2383.
  \MR{4500358}

\bibitem[Zyw22]{Zywina2020}
David Zywina, \emph{Determining monodromy groups of abelian varieties}, Res.
  Number Theory \textbf{8} (2022), no.~4, Paper No. 89, 53. \MR{4496693}

\end{thebibliography}

\end{document}